\newtheorem{prop}{Proposition}
\newtheorem{lemma}[prop]{Lemma}
\newtheorem{theorem}[prop]{Theorem}
\newtheorem{corollary}[prop]{Corollary}
\theoremstyle{definition}
\newtheorem{example}[prop]{Example}
\newcommand{\W}{\mathcal{W}}
\newcommand{\Ss}{\mathcal{S}}
\newcommand{\seqnum}[1]{\href{https://oeis.org/#1}{\rm \underline{#1}}}
\begin{document}
\tikzset{mystyle/.style={matrix of nodes,
        nodes in empty cells,
        row 1/.style={nodes={draw=none}},
        row sep=-\pgflinewidth,
        column sep=-\pgflinewidth,
        nodes={draw,minimum width=1cm,minimum height=1cm,anchor=center}}}
\tikzset{mystyleb/.style={matrix of nodes,
        nodes in empty cells,
        row sep=-\pgflinewidth,
        column sep=-\pgflinewidth,
        nodes={draw,minimum width=1cm,minimum height=1cm,anchor=center}}}

\title{Counting $r\times s$ rectangles in nondecreasing and Smirnov words}

\author[SELA FRIED]{Sela Fried$^{*}$}
\thanks{$^{*}$ Department of Computer Science, Israel Academic College,
52275 Ramat Gan, Israel.
\\
\href{mailto:friedsela@gmail.com}{\tt friedsela@gmail.com}}

\begin{abstract}
The rectangle capacity, a word statistic that was recently introduced by the author and Mansour, counts, for two fixed positive integers $r$ and $s$, the number of occurrences of a rectangle of size $r\times s$ in the bargraph representation of a word. In this work we find the bivariate generating function for the distribution on nondecreasing words of the number of $r\times s$ rectangles and the generating function for their total number over all nondecreasing words. We also obtain the analog results for Smirnov  words, which are words that have no consecutive equal letters. This complements our recent results concerned with general words (i.e., not restricted) and Catalan words.
\bigskip

\noindent \textbf{Keywords:} bargraph, generating function, nondecreasing, Smirnov, word.
\smallskip

\noindent
\textbf{Math.~Subj.~Class.:} 05A05, 05A15.
\end{abstract}

\maketitle

\section{Introduction}
Let $n$ and $k$ be two positive integers and set $[k] = \{1,2,\ldots,k\}$. A word over $k$ of length $n$ is any element of the set $[k]^n$. Words have a visual representation in terms of bargraphs (see Figure \ref{fig;1} below), giving rise to many natural statistics on words, such as their water capacity, number of lit cells, and  perimeter (See \cite{MS2} for a comprehensive review of the subject).

In this work we continue the study of a new statistic, that was introduced by the author and Mansour in \cite{FrM}, called \emph{rectangle capacity}.  This statistic, for two fixed positive integers $r$ and $s$, counts the number of occurrences of a rectangle of size $r\times s$ in the bargraph representation of a word (see Figure \ref{fig;2} below). In \cite{FrM} we studied general words (i.e., not restricted) and Catalan words. The purpose of this work is to explore additional families of words, namely nondecreasing words and Smirnov words. By the former we mean words $w_1\cdots w_n\in[k]^n$ such that $w_{i+1}\geq w_i$, for each $1\leq i\leq n-1$, and, by the latter, we mean words $w_1\cdots w_n\in[k]^n$ such that $w_{i+1}\neq w_i$, for each $1\leq i\leq n-1$ (cf.\ \cite[Example 23 on p.\ 193]{F}).

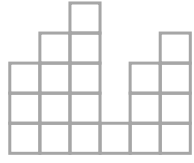
\begin{figure}[H]
\begin{center}
\centering
\scalebox{.40}{
\begin{tikzpicture}
\foreach \y in {1,2,3}
{\node[line width=1mm] at (0,\y) [rectangle,draw=black!35, minimum size=1cm]  {};}

\foreach \y in {1,2,3,4}
{\node[line width=1mm] at (1,\y) [rectangle,draw=black!35, minimum size=1cm]  {};}

\foreach \y in {1,2,3,4,5}
{\node[line width=1mm] at (2,\y) [rectangle,draw=black!35, minimum size=1cm]  {};}

\foreach \y in {1}
{\node[line width=1mm] at (3,\y) [rectangle,draw=black!35, minimum size=1cm]  {};}

\foreach \y in {1,2,3}
{\node[line width=1mm] at (4,\y) [rectangle,draw=black!35, minimum size=1cm]  {};}

\foreach \y in {1,2,3,4}
{\node[line width=1mm] at (5,\y) [rectangle,draw=black!35, minimum size=1cm]  {};}
\end{tikzpicture}}
\end{center}
\caption{The bargraph representation of the word $345134$}
\label{fig;1}
\end{figure}%

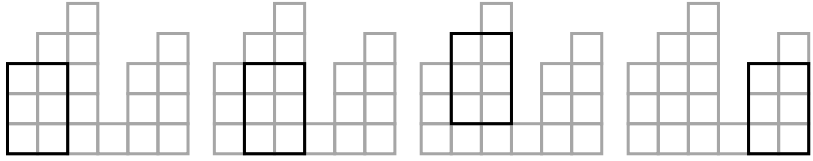
\begin{figure}[H]
\centering
\scalebox{.40}{
\begin{tikzpicture}
\foreach \y in {1,2,3}
{\node[line width=1mm] at (0,\y) [rectangle,draw=black!35, minimum size=1cm]  {};}

\foreach \y in {1,2,3,4}
{\node[line width=1mm] at (1,\y) [rectangle,draw=black!35, minimum size=1cm]  {};}

\foreach \y in {1,2,3,4,5}
{\node[line width=1mm] at (2,\y) [rectangle,draw=black!35, minimum size=1cm]  {};}

\foreach \y in {1}
{\node[line width=1mm] at (3,\y) [rectangle,draw=black!35, minimum size=1cm]  {};}

\foreach \y in {1,2,3}
{\node[line width=1mm] at (4,\y) [rectangle,draw=black!35, minimum size=1cm]  {};}

\foreach \y in {1,2,3,4}
{\node[line width=1mm] at (5,\y) [rectangle,draw=black!35, minimum size=1cm]  {};}
\node[line width=1mm] at (0.5,2) [rectangle,draw=black!35, minimum width=2cm, minimum height = 3cm, color=black]  {};
\end{tikzpicture}
\hspace{0.5cm}
\begin{tikzpicture}
\foreach \y in {1,2,3}
{\node[line width=1mm] at (0,\y) [rectangle,draw=black!35, minimum size=1cm]  {};}

\foreach \y in {1,2,3,4}
{\node[line width=1mm] at (1,\y) [rectangle,draw=black!35, minimum size=1cm]  {};}

\foreach \y in {1,2,3,4,5}
{\node[line width=1mm] at (2,\y) [rectangle,draw=black!35, minimum size=1cm]  {};}

\foreach \y in {1}
{\node[line width=1mm] at (3,\y) [rectangle,draw=black!35, minimum size=1cm]  {};}

\foreach \y in {1,2,3}
{\node[line width=1mm] at (4,\y) [rectangle,draw=black!35, minimum size=1cm]  {};}

\foreach \y in {1,2,3,4}
{\node[line width=1mm] at (5,\y) [rectangle,draw=black!35, minimum size=1cm]  {};}
\node[line width=1mm] at (1.5,2) [rectangle,draw=black!35, minimum width=2cm, minimum height = 3cm, color=black]  {};
\end{tikzpicture}
\hspace{0.5cm}
\begin{tikzpicture}
\foreach \y in {1,2,3}
{\node[line width=1mm] at (0,\y) [rectangle,draw=black!35, minimum size=1cm]  {};}

\foreach \y in {1,2,3,4}
{\node[line width=1mm] at (1,\y) [rectangle,draw=black!35, minimum size=1cm]  {};}

\foreach \y in {1,2,3,4,5}
{\node[line width=1mm] at (2,\y) [rectangle,draw=black!35, minimum size=1cm]  {};}

\foreach \y in {1}
{\node[line width=1mm] at (3,\y) [rectangle,draw=black!35, minimum size=1cm]  {};}

\foreach \y in {1,2,3}
{\node[line width=1mm] at (4,\y) [rectangle,draw=black!35, minimum size=1cm]  {};}

\foreach \y in {1,2,3,4}
{\node[line width=1mm] at (5,\y) [rectangle,draw=black!35, minimum size=1cm]  {};}
\node[line width=1mm] at (1.5,3) [rectangle,draw=black!35, minimum width=2cm, minimum height = 3cm, color=black]  {};
\end{tikzpicture}
\hspace{0.5cm}
\begin{tikzpicture}
\foreach \y in {1,2,3}
{\node[line width=1mm] at (0,\y) [rectangle,draw=black!35, minimum size=1cm]  {};}

\foreach \y in {1,2,3,4}
{\node[line width=1mm] at (1,\y) [rectangle,draw=black!35, minimum size=1cm]  {};}

\foreach \y in {1,2,3,4,5}
{\node[line width=1mm] at (2,\y) [rectangle,draw=black!35, minimum size=1cm]  {};}

\foreach \y in {1}
{\node[line width=1mm] at (3,\y) [rectangle,draw=black!35, minimum size=1cm]  {};}

\foreach \y in {1,2,3}
{\node[line width=1mm] at (4,\y) [rectangle,draw=black!35, minimum size=1cm]  {};}

\foreach \y in {1,2,3,4}
{\node[line width=1mm] at (5,\y) [rectangle,draw=black!35, minimum size=1cm]  {};}
\node[line width=1mm] at (4.5,2) [rectangle,draw=black!35, minimum width=2cm, minimum height = 3cm, color=black]  {};
\end{tikzpicture}}
\caption{There are four occurrences of a rectangle of size $3\times 2$ in the bargraph representation of the word $345134$.}\label{fig;2}
\end{figure}

Before we begin, let us fix three positive integers $r,s$, and $k$ and let $n$ be a nonnegative integer. If $m$ is a positive integer, we denote by $[m]$ the set $\{1,2,\ldots,m\}$. 

\section{Main results - Nondecreasing Words}

We denote by $\W_{n,k}^{\nearrow}$ the set of nondecreasing words over $k$ of length $n$, i.e.
\[\W_{n,k}^{\nearrow}=\left\{ w_{1}\cdots w_{n}\in[k]^{n}\;:\;w_{i}\leq w_{i+1}\text{ for every \ensuremath{i\in[n-1]}}\right\}.\] It is well known that \[\left|\W_{n,k}^{\nearrow}\right |=\binom{n+k-1}{k-1}.\] We distinguish between two cases, namely $r=1$ and $r\geq 2$.

\subsection{\texorpdfstring{$1\times s$}{} rectangles}

Denote by $a_{n,k}=a_{n,k}(t)$ the distribution on $\W_{n,k}^{\nearrow}$ of the number of $1\times s$ rectangles and let $A_k(x,t)$ denote the generating function of the numbers $a_{n,k}$. We shall need the following restrictions of $\W_{n,k}^{\nearrow}$: Let \[\W_{n,k}^{\nearrow,(0)}=\left\{ w_{1}\cdots w_{n}\in\W_{n,k}^{\nearrow}\;:\;w_{j}>1\text{ for every \ensuremath{j\in[n]}}\right\} \]
and, for $i\in [n]$, we define \[\W_{n,k}^{\nearrow,(i)}	=\left\{ w_{1}\cdots w_{n}\in\W_{n,k}^{\nearrow}\;:\;w_{i}=1\textnormal{ and } w_{j}>1\text{ for every \ensuremath{i<j\leq n}}\right\}.\] For $0\leq i\leq n$, let $a_{n,k}^{(i)}$ be the restriction of $a_{n,k}$ to $\W_{n,k}^{\nearrow,(i)}$. Notice that if $w_1\cdots w_n \in \W_{n,k}^{\nearrow,(i)}$, then $w_1=\cdots=w_i=1$. 

\begin{theorem}
We have
\begin{equation}\label{eq;p01}
A_k(x,t) =
\sum_{i=0}^{k-1}\frac{\alpha_{k-i}(t^{i}x,t)}{t^{i(s-1)}\prod_{j=1}^{i}(1-t^{j}x)},
\end{equation}
where \[\alpha_{m}(x,t)=\sum_{n=0}^{s-2}\binom{n+m-1}{m-1}x^{n}+\frac{x^{s-1}}{1-tx}\binom{s+m-3}{m-1}-\frac{1}{t^{s-1}(1-tx)}\sum_{n=0}^{s-2}\binom{n+m-2}{m-2}(tx)^{n}.\]
\end{theorem}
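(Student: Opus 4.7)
The plan is to exploit the decomposition $\W_{n,k}^{\nearrow} = \bigsqcup_{i=0}^{n}\W_{n,k}^{\nearrow,(i)}$ foreshadowed right before the statement, turn it into a recursion $A_k \to A_{k-1}$, and then iterate. Fix $w \in \W_{n,k}^{\nearrow,(i)}$; as noted in the excerpt, $w = 1^i u$ with $u \in \W_{n-i,k}^{\nearrow}$ using only letters in $[2,k]$, and the shift $u'_j := u_j - 1$ yields a bijection with $\W_{n-i,k-1}^{\nearrow}$. Because $w$ is nondecreasing, the minimum of any $s$-window is attained at its left endpoint, hence $\rc(w) = \sum_{j=1}^{n-s+1} w_j$ when $n \geq s-1$ and $\rc(w) = 0$ otherwise. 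A short case split at position $i$ (according to whether $i \geq n-s+1$ or $i < n-s+1$) delivers the uniform identity $\rc(w) = (n-s+1) + \rc(u')$ for $n \geq s-1$: the leading $1$'s contribute $\min(i, n-s+1)$ while the shifted entries contribute $\sum(u'_\ell+1)$, and these always collapse to the stated form, with $\rc(u') = 0$ whenever $|u'|<s$.

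Summing over $i$ and applying the hockey stick identity $\sum_{i=0}^{n}\binom{n-i+k-2}{k-2} = \binom{n+k-1}{k-1}$ in the low regime gives
\[
a_{n,k}(t) = \begin{cases} \binom{n+k-1}{k-1}, & n \leq s-2,\\ t^{n-s+1}\sum_{j=0}^{n} a_{j,k-1}(t), & n \geq s-1, \end{cases}
\]
which, together with $\sum_n x^n \sum_{j=0}^n a_{j,k-1}(t) = A_{k-1}(x,t)/(1-x)$ and the substitution $x \mapsto xt$, translates into the functional equation
\[
A_k(x,t) = \beta_k(x,t) + \frac{A_{k-1}(xt,t)}{t^{s-1}(1-xt)}, \quad \beta_k(x,t) := \sum_{n=0}^{s-2}\binom{n+k-1}{k-1}\bigl(x^n - t^{-(s-1)}(xt)^n\bigr).
\]
Iterating $k$ times with base case $A_0(x,t) = 1$ then produces
\[
A_k(x,t) = \sum_{i=0}^{k-1}\frac{\beta_{k-i}(xt^i,t)}{t^{i(s-1)}\prod_{j=1}^{i}(1-xt^j)} + \frac{1}{t^{k(s-1)}\prod_{j=1}^{k}(1-xt^j)}.
\]

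Finally, I will reconcile this with the theorem's formula by verifying the calibration identity $\alpha_m(x,t) - \beta_m(x,t) = \binom{m-2}{m-1}/(t^{s-1}(1-tx))$; using Pascal's rule $\binom{n+m-1}{m-1}=\binom{n+m-2}{m-2}+\binom{n+m-2}{m-1}$ the bracketed telescope collapses to a single binomial, which vanishes for $m \geq 2$ but equals $1$ for $m=1$ (where the degenerate $\binom{n-1}{-1}=0$ has already wiped out the third sum in $\alpha_1$). Thus $\alpha_{k-i}=\beta_{k-i}$ for $i<k-1$, while the excess $1/(t^{s-1}(1-tx))$ evaluated at $x\mapsto xt^{k-1}$ in the $i=k-1$ slot is precisely the tail $1/(t^{k(s-1)}\prod_{j=1}^{k}(1-xt^j))$; absorbing it converts the iterated formula into the claimed $\sum_{i=0}^{k-1}\alpha_{k-i}(xt^i,t)/\cdots$. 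The main obstacle will be the case analysis supporting the uniform identity $\rc(w)=(n-s+1)+\rc(u')$, and the careful bookkeeping at $m=1$ where binomial conventions determine the alignment of $\alpha_1$ with the tail of the iteration.
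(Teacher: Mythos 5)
Your proposal is correct and follows essentially the same route as the paper: the same decomposition of $\W_{n,k}^{\nearrow}$ by the block of leading $1$'s, yielding the recursion $a_{n,k}=t^{n-s+1}\sum_{j=0}^{n}a_{j,k-1}$, the same functional equation $A_k(x,t)=\beta_k(x,t)+\tfrac{1}{t^{s-1}(1-tx)}A_{k-1}(tx,t)$, and iteration/induction on $k$. The only cosmetic differences are that you make the identity $\rc(w)=(n-s+1)+\rc(u')$ explicit, start the recursive regime at $n\ge s-1$ rather than $n\ge s$, and anchor the iteration at $A_0=1$ with a final check that $\alpha_m=\beta_m$ for $m\ge 2$ while the $m=1$ discrepancy absorbs the leftover tail—all of which check out.
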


\begin{proof}
The set $\W_{n,1}^{\nearrow}$ consists solely of the word $1\cdots 1$, containing $n-s+1$ rectangles of size $1\times s$. Thus, $a_{n,1} = t^{\max\{0,n-s+1\}}$ and it is easily verified that the corresponding generating function is given by \[A_1(x,t) = \alpha_1(x,t) = \frac{1-x^{s-1}}{1-x}+\frac{x^{s-1}}{1-tx}.\] Assume now that $k\geq 2$. For $n\geq s$ we have
\begin{align*}
a_{n,k}&=a_{n,k}^{(0)}+\sum_{i=1}^{n}a_{n,k}^{(i)}\\&=t^{n-s+1}a_{n,k-1}+t^{n-s+1}\sum_{i=1}^{n}a_{n-i,k-1}.
\end{align*}
Multiplying both sides of this equation by $x^n$, summing over $n\geq s$ and adding $\sum_{n=0}^{s-1} a_{n,k}x^n$ to both sides, with some algebra we obtain \begin{align}
A_{k}(x,t)&=\sum_{n=0}^{s-1}\binom{n+k-1}{k-1}x^{n}-\binom{s+k-3}{k-2}x^{s-1}\nonumber\\&+\frac{1}{t^{s-1}(1-tx)}\left(A_{k-1}(tx,t)+\sum_{n=0}^{s-2}\binom{n+k-2}{k-2}((tx)^{s}-(tx)^{n})\right),\nonumber
\end{align} from which \eqref{eq;p01} follows by induction.
\end{proof}


\begin{corollary}\label{cor;fb1}
The generating function for the total number $f_k(n)$ of $1\times s$ rectangles over all words belonging to $\W_{n,k}^{\nearrow}$ is given by \[\frac{x^{s}\sum_{i=0}^{k-1}(-1)^{i}\binom{i+s-2}{i}\binom{k+s}{s+i+1}x^{i}}{(1-x)^{k+1}}.\] Thus, for $n\geq s$, \[f_k(n)=\sum_{i=0}^{\min\left\{ k-1,n-s\right\} }(-1)^{i}\binom{i+s-2}{i}\binom{k+s}{s+i+1}\binom{n-s-i+k}{k}.\]
\end{corollary}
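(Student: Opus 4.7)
The plan is to obtain $F_k(x):=\sum_{n}f_k(n)x^n$ as $\partial_t A_k(x,t)\big|_{t=1}$, since $a_{n,k}(t)=\sum_{w}t^{\rc(w)}$ gives $\partial_t a_{n,k}|_{t=1}=\sum_w \rc(w)=f_k(n)$. I would compute this derivative directly from the explicit formula \eqref{eq;p01}.

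The critical preliminary is the identity $\alpha_m(x,1)=0$ for every $m\geq 2$ (while $\alpha_1(x,1)=1/(1-x)$). To prove it I apply Pascal's identity $\binom{n+m-1}{m-1}=\binom{n+m-2}{m-1}+\binom{n+m-2}{m-2}$ and telescope:
\[
(1-x)\sum_{n=0}^{s-2}\binom{n+m-1}{m-1}x^n=\sum_{n=0}^{s-2}\binom{n+m-2}{m-2}x^n-\binom{s+m-3}{m-1}x^{s-1},
\]
so upon dividing by $1-x$ and substituting into the definition of $\alpha_m(x,1)$ all three pieces cancel. As a sanity check, \eqref{eq;p01} with $t=1$ then collapses to its $i=k-1$ summand, $\alpha_1(x,1)/(1-x)^{k-1}=1/(1-x)^k$, as expected.

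Denote the $i$-th summand of \eqref{eq;p01} by $B_i(x,t)$. Because $\alpha_{k-i}(x,1)=0$ for $i\leq k-2$, the product rule gives
\[
\partial_t B_i\big|_{t=1}=\frac{\partial_t\alpha_{k-i}(t^i x,t)\big|_{t=1}}{(1-x)^i}\qquad (i\leq k-2),
\]
i.e., the contribution from differentiating the denominator drops out. Only the summand $i=k-1$ additionally supplies a denominator-derivative term, and it alone produces the pole $(1-x)^{-(k+1)}$ that appears in the answer. I would compute $\partial_t\alpha_m(t^i x,t)|_{t=1}$ from the definition by the chain rule, separating the dependence on $t$ through the argument $t^i x$ from the loose $t$, and then sum over $i$.

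The main obstacle will be the final algebraic consolidation into the rational fraction $\frac{x^s\sum_{i=0}^{k-1}(-1)^i\binom{i+s-2}{i}\binom{k+s}{s+i+1}x^i}{(1-x)^{k+1}}$; I expect this to involve Chu--Vandermonde-type convolutions together with the negative-upper-index identity $(-1)^i\binom{i+s-2}{i}=\binom{1-s}{i}$ to regroup the terms arising from the various $i$. Once the generating function is in this form, the closed expression for $f_k(n)$ follows at once from $[x^n]\frac{x^{s+i}}{(1-x)^{k+1}}=\binom{n-s-i+k}{k}$ (valid iff $n\geq s+i$), which also explains the truncation $i\leq\min\{k-1,n-s\}$.
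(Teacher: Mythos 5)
Your proposal follows essentially the same route as the paper: differentiate the explicit formula \eqref{eq;p01} term by term at $t=1$, exploit the vanishing $\alpha_m(x,1)=0$ for $m\geq 2$ (which the paper asserts as "not hard to see" and you actually verify by a telescoping Pascal argument) so that only the $i=k-1$ summand contributes the denominator-derivative term and the $(1-x)^{-(k+1)}$ pole, and then consolidate algebraically. Both you and the paper leave the final consolidation into the stated rational form at the level of "some algebra," so the proposal is correct and matches the paper's proof.
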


\begin{proof}
Let $0\leq i\leq k-1$. We have 
\begin{align}
&\frac{\partial}{\partial t}\frac{\alpha_{k-i}(t^{i}x,t)}{t^{i(s-1)}\prod_{j=1}^{i}(1-t^{j}x)}\nonumber\\&=\frac{\left(\frac{\partial}{\partial t}\alpha_{k-i}(t^{i}x,t)\right)t^{i(s-1)}\prod_{j=1}^{i}(1-t^{j}x)-\alpha_{k-i}(t^{i}x,t)\left(\frac{\partial}{\partial t}t^{i(s-1)}\prod_{j=1}^{i}(1-t^{j}x)\right)}{\left(t^{i(s-1)}\prod_{j=1}^{i}(1-t^{j}x)\right)^{2}} \nonumber.  \end{align} Now, 
\begin{align}
\left[t^{i(s-1)}\prod_{j=1}^{i}(1-t^{j}x)\right]_{|t=1}&=(1-x)^i,\nonumber\\ 
\left[\frac{\partial}{\partial t}t^{i(s-1)}\prod_{j=1}^{i}(1-t^{j}x)\right]_{|t=1}&=i(1-x)^{i-1}\left((s-1)(1-x)-\frac{x(i+1)}{2}\right)\nonumber.
\end{align} It is not hard to see that
\begin{align}
\left[\alpha_{k-i}(t^{i}x,t)\right]_{|t=1} &= \begin{cases}
0&\textnormal{if }i < k-1\\ \frac{1}{1-x}&\textnormal{if }i = k-1,
\end{cases}\nonumber\\
\left[\frac{\partial}{\partial t}\alpha_{k-i}(t^{i}x,t)\right]_{|t=1}&=
\begin{cases}
\sum_{n=0}^{s-2}(s-1-n)\binom{n+k-1-i}{k-1-i}x^{n}&\textnormal{if }i < k-1\\ 
\frac{x(x^{s-1}+k-1)}{(1-x)^{2}}&\textnormal{if }i = k-1.
\end{cases}\nonumber
\end{align} It follows that 
\begin{align}
&\left[\frac{\partial}{\partial t}A_k(x,t)\right]_{|t=1}\nonumber\\& =
\sum_{i=0}^{k-1}\left[\frac{\partial}{\partial t}\frac{\alpha_{k-i}(t^{i}x,t)}{t^{i(s-1)}\prod_{j=1}^{i}(1-t^{j}x)}\right]_{|t=1}\nonumber\\
&=\sum_{i=0}^{k-2}\frac{\sum_{n=0}^{s-2}(s-1-n)\binom{n+k-1-i}{k-1-i}x^{n}}{(1-x)^{i}}+\frac{x\left(x^{s-1}+\frac{(k-1)(2s+k)}{2}\right)-(k-1)(s-1)}{(1-x)^{k+1}},\nonumber
\end{align}
from which, with some algebra, the assertion follows.
\end{proof}

\begin{example}
For $k=2$ we have 
\[
f_2(n)=\frac{3n^{2}+(7-4s)n+(s-1)(s-4)}{2}\] and Table \ref{tablea1} lists the corresponding sequences that we found in the On-Line Encyclopedia of Integer Sequences (OEIS) \cite{SL}. For $k=3$ we have
\[f_3(n)=\frac{2n^{3}-3(s-3)n^{2}+(s^{2}-10s+13)n+2(s-1)(s-3)}{2}\] and it seems that no corresponding sequence is registered in the OEIS.
\end{example}

\begin{table}[H]
\begin{center}
{\renewcommand{\arraystretch}{1.1}
\begin{tabular}{||c| c||c|c ||}
 \hline
 $s$& $f_2(n)$ & OEIS \\ [0.5ex]
 \hline\hline
 1  &$(3n^2-3n)/2$ &\seqnum{A045943} \\ 
  2  &$(3n^2 - n - 2)/2$ &\seqnum{A115067} \\ 
   3  &$(3n^{2}-5n-2)/2$ &\seqnum{A140090} \\ 
   4  &$(3n^{2}-9n)/2$ &\seqnum{A140091} \\ 
   5  &$(3n^{2}-13n+4)/2$ &\seqnum{A059845} \\ 
   6  &$(3n^{2}-17n+10)/2$ &\seqnum{A140672} \\ 
   7  &$(3n^{2}-21n+18)/2$ &\seqnum{A140673} \\ 
   8  &$(3n^2-25n+28)/2$ &\seqnum{A140674} \\ 
   9  &$(3n^2-29n+40)/2$ &\seqnum{A140675} \\ 
   10  &$(3n^2-33n+54)/2$ &\seqnum{A151542} \\ 
   11  &$(3n^2-37n+70)/2$ &\seqnum{A370238} \\ 
  \hline
\end{tabular}
\caption{The total number $f_2(n)$ of $1\times s$ rectangles over all words of length $n$, for several values of $s$.}\label{tablea1}}
\end{center}
\end{table}

\subsection{\texorpdfstring{$r\times s$}{} rectangles, where \texorpdfstring{$r\geq 2$}{}}

Denote by $b_{n,k}=b_{n,k}(t)$ the distribution on $\W_{n,k}^{\nearrow}$ of the number of $r\times s$ rectangles and let $B_k(x,t)$ denote the generating function of the numbers $b_{n,k}$. We shall need the following restriction of $\W_{n,k}^{\nearrow}$: For $m\in[k]$, let \[\W_{n,k}^{\nearrow,(\geq m)}=\left\{ w_{1}\cdots w_{n}\in\W_{n,k}^{\nearrow}\;:\;w_{j}\geq m\text{ for each \ensuremath{j\in[n]}}\right\},\] and let $b_{n,k}^{(\geq m)}$ be the restriction of $b_{n,k}$ to $\W_{n,k}^{\nearrow,(\geq m)}$. Let $B_k^{(\geq m)}(x,t)$ be the generating function of the numbers $b_{n,k}^{(\geq m)}$.

\begin{lemma}
Assume that $k\geq r-1$. Then
\begin{equation}\label{8bz}
B_k^{(\geq r-1)}(x,t) =
\sum_{i=0}^{k-r+1}\frac{\alpha_{k-i}^{(\geq r-1)}(t^ix,t)}{t^{i(s-1)}\prod_{j=0}^{i-1}(1-t^jx)},
\end{equation} where 
\begin{align}
& \alpha_m^{(\geq r-1)}(x,t)\nonumber\\&=\sum_{n=0}^{s-1}\binom{n+m-r+1}{m-r+1}x^{n}+\frac{x^{s}}{1-x}\binom{s+m-r}{s-1}-\frac{1}{t^{s-1}(1-x)}\sum_{n=0}^{s-1}\binom{n+m-r}{m-r}(tx)^{n}.\nonumber
\end{align}
\end{lemma}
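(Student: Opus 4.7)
The plan is to mimic the preceding theorem's proof: induct on $k$ starting from $k=r-1$, extracting a recursion of the same type (with the characteristic shift $x\mapsto tx$) by decomposing $\W_{n,k}^{\nearrow,(\geq r-1)}$ according to the number of leading copies of $r-1$.

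\textbf{Recursion.} Given $w\in\W_{n,k}^{\nearrow,(\geq r-1)}$ with exactly $i$ leading $(r-1)$'s, every length-$s$ window that meets the prefix has minimum $r-1$ and contributes $0$ to the $r\times s$ rectangle count; so the count of $w$ equals the count of its suffix $v\in\W_{n-i,k}^{\nearrow,(\geq r)}$. The entrywise shift $u=v-1$ is a bijection onto $\W_{n-i,k-1}^{\nearrow,(\geq r-1)}$, and, since every entry of $v$ is $\ge r$, each of the $\max(0,n-i-s+1)$ length-$s$ windows of $v$ loses exactly one row under the shift (for $v_j=r$ the single available row vanishes, and for $v_j\ge r+1$ one row is lost from the top). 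Summing over $i$ yields the combinatorial recursion $b_{n,k}^{(\geq r-1)}(t)=\sum_{i=0}^{n}t^{\max(0,n-i-s+1)}\,b_{n-i,k-1}^{(\geq r-1)}(t)$. Multiplying by $x^n$, summing on $n$, and substituting $m=n-i$ factors out $1/(1-x)$. Splitting the inner sum at $m=s$, using that $b_{m,k-1}^{(\geq r-1)}(t)=\binom{m+k-r}{k-r}$ (independent of $t$) for $m<s$, rewriting $t^{m-s+1}x^m=t^{1-s}(tx)^m$ for $m\ge s$, and applying the hockey-stick identity $\sum_{m=0}^{s-1}\binom{m+k-r}{k-r}\tfrac{x^m}{1-x}=\sum_{n=0}^{s-1}\binom{n+k-r+1}{k-r+1}x^n+\tfrac{x^s}{1-x}\binom{s+k-r}{s-1}$, one obtains
\[
B_k^{(\geq r-1)}(x,t)=\alpha_k^{(\geq r-1)}(x,t)+\frac{B_{k-1}^{(\geq r-1)}(tx,t)}{t^{s-1}(1-x)}.
\]

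\textbf{Induction.} For the base case $k=r-1$, the only words are the constants $(r-1)^n$, so $B_{r-1}^{(\geq r-1)}(x,t)=1/(1-x)$; one checks (using the convention $\binom{n-1}{-1}=0$ for $n\ge 1$) that this equals $\alpha_{r-1}^{(\geq r-1)}(x,t)$, the single-term case of \eqref{8bz}. For the inductive step, substituting the formula for $B_{k-1}^{(\geq r-1)}(tx,t)$ into the GF recursion, the denominator $t^{i(s-1)}\prod_{j=0}^{i-1}(1-t^{j+1}x)$ of the $i$-th summand combines with the extra factor $t^{s-1}(1-x)$ to form $t^{(i+1)(s-1)}\prod_{j=0}^{i}(1-t^jx)$; the reindexing $i'=i+1$ shifts the summation to $i'=1,\ldots,k-r+1$, and the term $\alpha_k^{(\geq r-1)}(x,t)$ outside the sum supplies the $i'=0$ summand, reassembling \eqref{8bz}. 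The main obstacle is the rectangle-count bookkeeping under $v\mapsto v-1$: one must verify that every length-$s$ window of $v$ loses precisely one row (not zero or two), which uses crucially the hypothesis that all entries of $v$ are $\ge r$.
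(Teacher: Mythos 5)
Your proof is correct and follows essentially the same route as the paper: the same decomposition by the length of the leading run of $(r-1)$'s, combined with the shift-by-one bijection that sends $\W_{n,k}^{\nearrow,(\geq r)}$ to $\W_{n,k-1}^{\nearrow,(\geq r-1)}$ while removing exactly one rectangle per window, yields the same functional equation $B_k^{(\geq r-1)}(x,t)=\alpha_k^{(\geq r-1)}(x,t)+\tfrac{1}{t^{s-1}(1-x)}B_{k-1}^{(\geq r-1)}(tx,t)$, which the paper likewise iterates by induction from the base case $B_{r-1}^{(\geq r-1)}(x,t)=1/(1-x)$. Your write-up merely makes explicit the bookkeeping the paper hides behind ``with some algebra'' (the uniform recursion with the $\max$, the hockey-stick identity, and the reindexing of denominators), so it matches the paper's proof in substance.
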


\begin{proof}
The set $\W_{n,r-1}^{\nearrow,(\geq r-1)}$ consists solely of the word $(r-1)\cdots (r-1)$, containing no rectangles of size $r\times s$. Thus, $b_{n,r-1}^{(\geq r-1)} = 1$ and the  corresponding generating function is given by $B_{r-1}^{(\geq r-1)}(x,t)=1/(1-x) $. Assume now that $k\geq r$. For $n\geq s$ we have
\begin{align}
b_{n,k}^{(\geq r-1)}&=b_{n,k}^{(\geq r)}+\sum_{i=1}^{n}b_{i,r-1}^{(\geq r-1)}b_{n-i,k}^{(\geq r)}\nonumber\\
&=t^{n-s+1}b_{n,k-1}^{(\geq r-1)}+\sum_{i=1}^{n-s}t^{n-i-s+1 }b_{n-i,k-1}^{(\geq r-1)}+\sum_{i=n-s+1}^{n}b_{n-i,k-1}^{(\geq r-1)}.  \nonumber
\end{align}
Multiplying both sides of this equation by $x^n$, summing over $n\geq s$ and adding $\sum_{n=0}^{s-1}b_{n,k}^{(\geq r-1)}x^{n}$ to both sides, with some algebra we obtain the equation
\begin{align}
B_{k}^{(\geq r-1)}(x,t)&=\frac{1}{t^{s-1}(1-x)}B_{k-1}^{(\geq r-1)}(tx,t)+\sum_{n=0}^{s-1}\binom{n+k-r+1}{k-r+1}x^{n}\nonumber\\&-\frac{1}{t^{s-1}(1-x)}\sum_{n=0}^{s-1}\binom{n+k-r}{k-r}(tx)^{n}+\frac{x^{s}}{1-x}\sum_{n=0}^{s-1}\binom{n+k-r}{k-r},\nonumber
\end{align}
from which the statement immediately follows by induction.
\end{proof}

\begin{theorem}\label{thm;a0}
Assume that $k\geq r$. Then
\[
B_k(x,t) =
\frac{1}{(1-x)^{r-1}}\sum_{i=1}^{k-r+1}\frac{\alpha^{(\geq r-1)}_{k-i}(t^ix,t)}{t^{i(s-1)}\prod_{j=1}^{i-1}(1-t^jx)}+\beta_{k}(x,t),
\]
where 
\begin{align}
\beta_k(x,t)&=\sum_{i=0}^{s-1}\binom{i+k-1}{k-1}x^{i}+\frac{1}{(1-x)^{r-1}}\sum_{i=0}^{s-1}\binom{i+k-r}{k-r}x^{i}\nonumber\\&-\sum_{i=0}^{s-1}\binom{i+k-r}{k-r}x^{i}\sum_{n=0}^{s-1-i}\binom{n+r-2}{r-2}x^{n}-\frac{1}{t^{s-1}(1-x)^{r-1}}\sum_{i=0}^{s-1}\binom{i+k-r}{k-r}(tx)^{i}.\nonumber
\end{align}

\begin{proof}
For $n\geq s$ we have 
\begin{align*}
b_{n,k}&=b_{n,k}^{(\geq r)}+\sum_{i=1}^{n}b_{i,r-1}b_{n-i,k}^{(\geq r)}\\&=t^{n-s+1}b_{n,k-1}^{(\geq r-1)}+\sum_{i=1}^{n-s}t^{n-i-s+1}b_{i,r-1}b_{n-i,k-1}^{(\geq r-1)}+\sum_{i=n-s+1}^{n}b_{i,r-1}b_{n-i,k-1}^{(\geq r-1)}.
\end{align*}
Multiplying both sides of this equation by $x^n$, summing over $n\geq s$ and adding $\sum_{n=0}^{s-1} b_{n,k}x^n$ to both sides, with some algebra we obtain the equation

\begin{align}
&B_{k}(x,t)\nonumber\\&=\frac{1}{t^{s-1}(1-x)^{r-1}}B_{k-1}^{(\geq r-1)}(tx,t)+\sum_{i=0}^{s-1}\binom{i+k-1}{k-1}x^{i}+\frac{1}{(1-x)^{r-1}}\sum_{i=0}^{s-1}\binom{i+k-r}{k-r}x^{i}\nonumber\\&-\sum_{i=0}^{s-1}\binom{i+k-r}{k-r}x^{i}\sum_{n=0}^{s-1-i}\binom{n+r-2}{r-2}x^{n}-\frac{1}{t^{s-1}(1-x)^{r-1}}\sum_{i=0}^{s-1}\binom{i+k-r}{k-r}(tx)^{i},\nonumber
\end{align} from which, together with \eqref{8bz}, the statement immediately follows.
\end{proof}
\end{theorem}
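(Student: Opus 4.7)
The plan is to mirror the preceding lemma's derivation: first establish a linear recurrence expressing $B_k(x,t)$ in terms of $B_{k-1}^{(\geq r-1)}(tx,t)$, and then substitute \eqref{8bz} to obtain the closed form.

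I would begin by decomposing any $w\in \W_{n,k}^{\nearrow}$ according to the position $i$ of its last letter $\leq r-1$ (with $i=0$ meaning no such letter). This yields the identity $b_{n,k} = b_{n,k}^{(\geq r)} + \sum_{i=1}^{n} b_{i,r-1}\, b_{n-i,k}^{(\geq r)}$, because every $r\times s$ rectangle must lie entirely in the suffix: the first $i$ columns have height at most $r-1$ and hence cannot sit inside a rectangle of height $r$, which in particular rules out any rectangle straddling the split. Next I would apply the shift identity $b_{n,k}^{(\geq r)}(t) = t^{\max\{0,n-s+1\}}\,b_{n,k-1}^{(\geq r-1)}(t)$, obtained by subtracting $1$ from every letter: rectangles at row $\rho\geq 2$ correspond bijectively to rectangles at row $\rho-1$ in the shifted word, while the $\max\{0,n-s+1\}$ rectangles rooted at row $1$ disappear and contribute the $t$-weight. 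Applying this both to the isolated $b_{n,k}^{(\geq r)}$ and to each $b_{n-i,k}^{(\geq r)}$, with a split of the inner sum according to whether $n-i\geq s$ or $n-i<s$, produces the three-piece recurrence displayed in the proof.

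Converting the recurrence to a generating-function identity, I would multiply by $x^n$, sum over $n\geq s$, and add back the $n<s$ terms. Since nondecreasing words over $[r-1]$ contain no columns of height $r$, we have $b_{i,r-1}(t)=\binom{i+r-2}{r-2}$, giving $\sum_{i\geq 0}b_{i,r-1}x^i = 1/(1-x)^{r-1}$ and accounting for the free length-$i$ prefix. Collecting like terms yields
\[
B_{k}(x,t) = \frac{1}{t^{s-1}(1-x)^{r-1}}B_{k-1}^{(\geq r-1)}(tx,t) + \beta_{k}(x,t),
\]
where $\beta_k$ collects the low-$n$ corrections and the truncations from the $n-i<s$ regime. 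Finally, substituting \eqref{8bz} with $k-1$ in place of $k$ and $x\mapsto tx$, the inner substitution sends $t^i x\mapsto t^{i+1}x$ and $\prod_{j=0}^{i-1}(1-t^jx)\mapsto \prod_{j=1}^{i}(1-t^jx)$; reindexing $i\mapsto i-1$ shifts the outer sum range from $\{0,\dots,k-r\}$ to $\{1,\dots,k-r+1\}$ and absorbs the external $1/t^{s-1}$, recovering the first summand of the theorem while leaving $1/(1-x)^{r-1}$ outside.

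The main obstacle I anticipate is the algebraic bookkeeping that produces the four explicit pieces of $\beta_k(x,t)$. Separating the $i\leq n-s$ and $i>n-s$ contributions, combining the added $n<s$ tails with the cut-off binomial sums, and determining which pieces cancel and which survive requires careful interchange of summation orders and repeated use of $\sum_{n\geq 0}\binom{n+a}{a}x^n = 1/(1-x)^{a+1}$. This is the longest, though routine, part of the argument.
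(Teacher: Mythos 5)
Your proposal follows essentially the same route as the paper: the decomposition of a nondecreasing word into a prefix over $[r-1]$ (which contributes no $r\times s$ rectangles, so $b_{i,r-1}=\binom{i+r-2}{r-2}$) and a suffix with letters $\geq r$, the shift identity $b_{n,k}^{(\geq r)}=t^{\max\{0,n-s+1\}}b_{n,k-1}^{(\geq r-1)}$ with the case split at $n-i\geq s$, the resulting functional equation $B_k(x,t)=\frac{1}{t^{s-1}(1-x)^{r-1}}B_{k-1}^{(\geq r-1)}(tx,t)+\beta_k(x,t)$, and the substitution of \eqref{8bz} with the reindexing $i\mapsto i+1$. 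The combinatorial justifications and the reindexing check you supply are correct, and the part you defer as routine bookkeeping is exactly what the paper also leaves to ``some algebra.''
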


\begin{corollary}
The generating function for the total number $g_{k}(n)$ of $r\times s$ rectangles over all words belonging to $\W_{n,k}^{\nearrow}$ is given by  \[\frac{x^{s}\sum_{i=0}^{k-r+1}(-1)^{i}\binom{i+s-2}{i}\binom{k-r+1+s}{s+i+1}x^{i}}{(1-x)^{k+1}}.\] Thus, for $n\geq s$, \[g_k(n)=\sum_{i=0}^{\min\left\{ k-r+1,n-s\right\} }(-1)^{i}\binom{i+s-2}{i}\binom{k-r+1+s}{s+i+1}\binom{n-s-i+k}{k}.\]
\end{corollary}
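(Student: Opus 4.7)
The plan is to mimic the argument used for Corollary~\ref{cor;fb1}: since $t$ marks the number of $r\times s$ rectangles in $B_k(x,t)$, the generating function for the total count $g_k(n)$ is $\left[\partial_t B_k(x,t)\right]_{t=1}$. Using the formula for $B_k(x,t)$ supplied by Theorem~\ref{thm;a0}, I would differentiate each summand by the quotient rule and set $t=1$ term by term. The key building blocks I would need are the four evaluations $\left[\alpha^{(\geq r-1)}_{k-i}(t^ix,t)\right]_{t=1}$, $\left[\partial_t\alpha^{(\geq r-1)}_{k-i}(t^ix,t)\right]_{t=1}$, and the analogous evaluations of $t^{i(s-1)}\prod_{j=1}^{i-1}(1-t^jx)$ and its $t$-derivative. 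The first of these simplifies nicely by Pascal's identity $\binom{n+m-r+1}{m-r+1}=\binom{n+m-r}{m-r+1}+\binom{n+m-r}{m-r}$, which collapses $\alpha^{(\geq r-1)}_m(x,1)$ into a short expression and, in particular, makes it vanish for most indices $i$, exactly as the corresponding $\alpha_{k-i}(t^ix,t)|_{t=1}$ vanished for $i<k-1$ in the proof of Corollary~\ref{cor;fb1}.

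Next I would handle the $\beta_k$ term. Since $\beta_k(x,t)$ depends on $t$ only through the single summand $\frac{1}{t^{s-1}(1-x)^{r-1}}\sum_{i=0}^{s-1}\binom{i+k-r}{k-r}(tx)^i$, its $t$-derivative at $t=1$ is a straightforward polynomial in $x$ (divided by $(1-x)^{r-1}$). I would then combine this with the surviving terms of the sum, putting everything over the common denominator $(1-x)^{k+1}$ --- noting that factors of $(1-x)^{r-1}$ from the prefactor combine with $(1-x)^{i-1}$ from $\prod_{j=1}^{i-1}(1-t^jx)|_{t=1}$ to produce $(1-x)^{k-r+1+r-1}=(1-x)^k$ or $(1-x)^{k+1}$ in the various contributions.

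Finally, I would verify that the resulting rational function is exactly $\frac{x^s\sum_{i=0}^{k-r+1}(-1)^i\binom{i+s-2}{i}\binom{k-r+1+s}{s+i+1}x^i}{(1-x)^{k+1}}$. A clean way to check this is to observe that the shape of the formula is identical to Corollary~\ref{cor;fb1} but with $k$ replaced by $k-r+1$ in the numerator sum, which strongly suggests a direct reduction: namely, one can verify that on the level of coefficients, $g_k(n)$ equals $f_{k-r+1}(n)$ multiplied by an appropriate factor coming from the ``base'' $(r-1)$ rows common to all words in $\W_{n,k}^{\nearrow,(\geq r-1)}$. I would use this observation as a sanity check rather than as the primary argument, since the direct computation via $\partial_t B_k(x,t)|_{t=1}$ is more systematic.

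The extraction of the closed form for $g_k(n)$ from the generating function is then routine: expand $(1-x)^{-(k+1)}=\sum_{j\geq 0}\binom{j+k}{k}x^j$, multiply by the numerator, and read off the coefficient of $x^n$, which gives the stated formula. The main obstacle is the algebraic simplification in the first two paragraphs: keeping track of the many binomial sums and telescoping them to the compact final form will be the most delicate bookkeeping, but it is in direct parallel with the calculation already carried out in the proof of Corollary~\ref{cor;fb1}.
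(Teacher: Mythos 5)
Your proposal is correct and follows essentially the same route as the paper's proof: differentiate the expression for $B_k(x,t)$ from Theorem~\ref{thm;a0} with respect to $t$, evaluate at $t=1$ via the quotient rule using exactly the four building-block evaluations you list (including the vanishing of $\alpha^{(\geq r-1)}_{k-i}(t^ix,t)|_{t=1}$ for $i<k-r+1$ and the observation that only the last summand of $\beta_k$ depends on $t$), and then combine over the common denominator $(1-x)^{k+1}$. The relation to $f_{k-r+1}$ that you mention as a sanity check is not used by the paper, but since you correctly relegate it to a consistency check it does not affect the validity of the argument.
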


\begin{proof}
Let $i\in[k-r+1]$. We have 
\begin{align}
&\frac{\partial}{\partial t}\frac{\alpha^{(\geq r-1)}_{k-i}(t^{i}x,t)}{t^{i(s-1)}\prod_{j=1}^{i-1}(1-t^{j}x)}\nonumber\\&=\frac{\left(\frac{\partial}{\partial t}\alpha^{(\geq r-1)}_{k-i}(t^{i}x,t)\right)t^{i(s-1)}\prod_{j=1}^{i-1}(1-t^{j}x)-\alpha^{(\geq r-1)}_{k-i}(t^{i}x,t)\left(\frac{\partial}{\partial t}t^{i(s-1)}\prod_{j=1}^{i-1}(1-t^{j}x)\right)}{\left(t^{i(s-1)}\prod_{j=1}^{i-1}(1-t^{j}x)\right)^{2}} \nonumber.  \end{align} Now, 
\begin{align}
\left[t^{i(s-1)}\prod_{j=1}^{i-1}(1-t^{j}x)\right]_{|t=1}&=(1-x)^{i-1},\nonumber\\ 
\left[\frac{\partial}{\partial t}t^{i(s-1)}\prod_{j=1}^{i-1}(1-t^{j}x)\right]_{|t=1}&=i(1-x)^{i-2}\left((s-1)(1-x)-\frac{x(i-1)}{2}\right)\nonumber.
\end{align} It is not hard to see that
\begin{align}
\left[\alpha^{(\geq r-1)}_{k-i}(t^{i}x,t)\right]_{|t=1} &= \begin{cases}
0&\textnormal{if }i < k-r+1\\ \frac{1}{1-x}&\textnormal{if }i = k-r+1,
\end{cases}\nonumber\\
\left[\frac{\partial}{\partial t}\alpha^{(\geq r-1)}_{k-i}(t^{i}x,t)\right]_{|t=1}&=
\begin{cases}
\frac{1}{1-x}\sum_{n=0}^{s-2}(s-1-n)\binom{n+k-r-i}{k-r-i}x^{n}&\textnormal{if }i < k-r+1\\ 
\frac{x(k-r+1)}{(1-x)^2}&\textnormal{if }i = k-r+1,
\end{cases}\nonumber\\
\left[\frac{\partial}{\partial t}\beta_k(x,t)\right]_{|t=1}&=\frac{1}{(1-x)^{r-1}}\sum_{n=0}^{s-2}(s-1-n)\binom{n+k-r}{k-r}x^n\nonumber.
\end{align} It follows that 
\begin{align}
&\left[\frac{\partial}{\partial t}A_k(x,t)\right]_{|t=1}\nonumber\\& =\frac{1}{(1-x)^{r-1}}
\sum_{i=1}^{k-r+1}\left[\frac{\partial}{\partial t}\frac{\alpha^{(\geq r-1)}_{k-i}(t^{i}x,t)}{t^{i(s-1)}\prod_{j=1}^{i-1}(1-t^{j}x)}\right]_{|t=1}+\left[\frac{\partial}{\partial t}\beta_k(x,t)\right]_{|t=1}\nonumber\\
&=\frac{1}{(1-x)^{r-1}}\left(\sum_{i=1}^{k-r}\frac{\sum_{n=0}^{s-2}(s-1-n)\binom{n+k-r-i}{k-r-i}x^{n}}{(1-x)^{i}}+(k-r+1)\frac{x(k-r+2)-2(s-1)(1-x)}{2(1-x)^{k-r+2}}\right)\nonumber\\&+\frac{1}{(1-x)^{r-1}}\sum_{n=0}^{s-2}(s-1-n)\binom{n+k-r}{k-r}x^n\nonumber\\&=\frac{1}{(1-x)^{r-1}}\sum_{i=0}^{k-r}\frac{\sum_{n=0}^{s-2}(s-1-n)\binom{n+k-r-i}{k-r-i}x^{n}}{(1-x)^{i}}+(k-r+1)\frac{x(k-r+2)-2(s-1)(1-x)}{2(1-x)^{k+1}}\nonumber
\end{align}
from which, with some algebra, the assertion follows.
\end{proof}

\begin{example}
For $r=k=2$ and arbitrary $s$ we have 
\[
g_2(n)=\binom{n-s+2}{2}.\] For $r=s=2$ and arbitrary $k$ we have
\[g_k(n)=\frac{n-1}{n+1}\binom{k}{2}\binom{n-1+k}{k},\] which, for $k=3, 4$ corresponds to \seqnum{A077414} and \seqnum{A105938} in the OEIS, respectively. 
For $s=1$ and arbitrary $k$ and $r$ we have
\[g_k(n)=\binom{k-r+2}{2}\binom{n-1+k}{k}\] and Table \ref{tablea5} lists the corresponding sequences that we found in OEIS.

\begin{table}[H]
\begin{center}
{\renewcommand{\arraystretch}{1.1}
\begin{tabular}{||c| c||c|c ||}
 \hline
 $k$& $r$&  OEIS \\ [0.5ex]
 \hline\hline
 $3$  &$2$ &\seqnum{A027480} \\ 
 $4$  &$2$ &\seqnum{A033487} \\ 
 $5$  &$2$ &\seqnum{A266732} \\ 
 $6$  &$2$ &\seqnum{A240440} \\ 
 $7$  &$2$ &\seqnum{A266733} \\ 
 $4$  &$3$ &\seqnum{A050534} \\ 
 $5$  &$3$ &\seqnum{A253945} \\ 
 $6$  &$3$ &\seqnum{A271040} \\ 
  \hline
\end{tabular}
\caption{The total number $g_k(n)$ of $r\times 1$ rectangles over all words of length $n$, for several values of $k$ and $r$.}\label{tablea5}}
\end{center}
\end{table}

\end{example}

\section{Main results - Smirnov Words}

We denote by $\Ss_{n,k}$ the set of Smirnov words (e.g., \cite[Example 23 on p.\ 193]{F}), i.e.,
\[\Ss_{n,k}=\left\{ w_{1}\cdots w_{n}\in[k]^{n}\;:\;w_{i}\neq w_{i+1}\text{ for every \ensuremath{i\in[n-1]}}\right\}.\] Since there are no Smirnov words for $k=1$, we assume that $k\geq 2$. We distinguish between two cases, namely $r=1$ and $r\geq 2$.

\subsection{\texorpdfstring{$1\times s$}{} rectangles}

Denote by $c_{n,k}=c_{n,k}(t)$ the distribution on $\Ss_{n,k}$ of the number of $1\times s$ rectangles. Let $C_k(x,t)$ denote the generating function of the numbers $c_{n,k}$. We shall need the following restrictions of $\Ss_{n,k}$: Let \[\Ss_{n,k}^{(0)}=\left\{ w_{1}\cdots w_{n}\in\mathcal{S}_{n,k}\;:\;w_{j}>1\text{ for every \ensuremath{j\in[n]}}\right\} \] 
and, for $i\in [n]$, we define \[\Ss_{n,k}^{(i)}	=\left\{ w_{1}\cdots w_{n}\in\Ss_{n,k}\;:\;w_{i}=1\textnormal{ and } w_{j}>1\text{ for every \ensuremath{j\in[i-1]}}\right\}.\] For $0\leq i\leq n$, let $c_{n,k}^{(i)}$ be the restriction of $c_{n,k}$ to $\Ss_{n,k}^{(i)}$. Clearly, 
\begin{align}
\left|\Ss_{n,k}\right|&=
\begin{cases}
1&\textnormal{if } n = 0\\
k(k-1)^{n-1} & \textnormal{otherwise},
\end{cases}\nonumber\\
\left|\Ss_{n,k}^{(1)}\right|&=
\begin{cases}
1&\textnormal{if } n = 0\\
(k-1)^{n-1} & \textnormal{otherwise}.
\end{cases}\nonumber
\end{align}

\begin{theorem}
We have
\begin{equation}\label{eq;pp01}
C_2(x,t) =
\frac{2(t-1)x^{s}}{(1-tx)(1-x)}+\frac{1+x}{1-x}\end{equation}
and, for $k\geq 3$,
\begin{align}
C_{k}(x,t)&=\frac{(1+tx)\left(\gamma_{k}(x,t)+\delta_{k}(x,t)C_{k-1}(tx,t)\right)}{1+tx-txC_{k-1}(tx,t)},\label{eq;11g0}
\end{align} where
\begin{align}
&\gamma_k(x,t)=\frac{1+x-k(k-1)^{s-1}x^{s}}{1-(k-1)x}-\frac{1+tx-k(k-1)^{s-1}(tx)^{s}}{t^{s-1}(1-(k-1)tx)}, \nonumber\\&\delta_k(x,t)=\frac{1-(k-2)tx-(k-1)^{s-1}(tx)^{s}}{t^{s-1}(1-(k-1)tx)}-\frac{tx}{1+tx}\frac{(1+x)(1-((k-1)x)^{s-1})}{1-(k-1)x}.\nonumber  
\end{align}
\end{theorem}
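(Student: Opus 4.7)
The plan is to handle $k = 2$ by direct enumeration and, for $k \geq 3$, to obtain the recurrence by decomposing each Smirnov word along its occurrences of the letter $1$.

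For $k = 2$, the only Smirnov words of length $n \geq 1$ are the alternating patterns $1212\cdots$ and $2121\cdots$, whose column heights alternate between $1$ and $2$. Hence for $s \geq 2$ every length-$s$ window has minimum $1$, so each word contributes exactly $\max\{0, n - s + 1\}$ rectangles. This gives $c_{n,2}(t) = 2\,t^{\max\{0, n - s + 1\}}$ for $n \geq 1$, and summing the resulting series yields \eqref{eq;pp01}.

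For $k \geq 3$, I decompose each $w \in \Ss_{n,k}$ uniquely as $w = u_0 \cdot 1 \cdot u_1 \cdot 1 \cdots 1 \cdot u_m$, where every $u_j$ is a Smirnov word over $\{2, \ldots, k\}$, with $u_0, u_m$ possibly empty and $u_1, \ldots, u_{m-1}$ non-empty (to respect the Smirnov condition around each $1$). Let $R'(u_j)$ denote the $1 \times s$-rectangle count of $u_j$ after subtracting $1$ from every letter, so that $u_j$ becomes an element of $\Ss_{|u_j|, k-1}$. The central identity is
\[R(w) = (n - s + 1) + \sum_{j=0}^{m} R'(u_j) \qquad \text{for all } n \geq s,\]
proved by splitting the $n - s + 1$ length-$s$ windows into those contained in some $u_j$ (whose minimum exceeds the shifted minimum by $1$) and those containing at least one letter $1$ (whose minimum is $1$), and using the relation $R(u_j) = R'(u_j) + \max\{0, |u_j| - s + 1\}$ to collapse the sum.

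Translating to generating functions and writing $V := C_{k-1}(tx, t)$, the standard block-gap sum---each $1$ contributing $tx$, boundary blocks contributing $V$, interior blocks contributing $V - 1$---gives the closed form
\[\Phi(x, t) := \frac{t^{1-s}(1 + tx)\,V}{1 + tx - tx\,V}, \qquad [x^n]\,\Phi = [x^n]\,C_k \text{ for every } n \geq s.\]
For $n < s$ the same formula assigns each word weight $t^{n-s+1}$ rather than the correct $1$, so $C_k - \Phi$ equals the polynomial $\sum_{n=0}^{s-1} |\Ss_{n,k}|(1 - t^{n-s+1})\,x^n$; substituting $|\Ss_{n,k}| = k(k-1)^{n-1}$ (for $n \geq 1$) and summing the two resulting geometric series produces precisely $\gamma_k(x, t)$. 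Finally, placing $C_k = \Phi + \gamma_k$ over the common denominator $1 + tx - tx\,V$ and reading off the coefficient of $V$ yields $\delta_k = t^{1-s} - tx\,\gamma_k/(1 + tx)$; the main remaining obstacle, equivalent to the identity $(1 + tx)\delta_k + tx\,\gamma_k = t^{1-s}(1 + tx)$, is a routine but somewhat lengthy algebraic check that I would dispatch by factoring $(1 - (k-1)tx)$ from the combined numerator after clearing denominators.
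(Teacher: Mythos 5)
Your proof is correct, and it takes a genuinely different route from the paper's. The paper conditions on the position of the \emph{first} occurrence of the letter $1$, which couples $c_{n,k}$ to the auxiliary quantity $c^{(1)}_{n,k}$ (words beginning with $1$) via the pair of recurrences \eqref{eq;hg0}--\eqref{eq;hg1}; it then translates both into functional equations and eliminates $C_k^{(1)}$ to reach \eqref{eq;11g0}. You instead factor each word along \emph{all} occurrences of $1$ at once, and the additive identity $R(w)=(n-s+1)+\sum_j R'(u_j)$ (valid for $n\geq s$, since every window either meets a $1$ and has minimum $1$, or lies inside some block $u_j$ and has minimum one more than the shifted minimum) turns the generating function into a direct sequence construction: the factor $\frac{(1+tx)V}{1+tx-txV}$ with $V=C_{k-1}(tx,t)$ falls out of the block--gap sum, the correction $\gamma_k$ acquires a transparent meaning as the $n<s$ boundary adjustment $\sum_{n=0}^{s-1}|\Ss_{n,k}|(1-t^{n-s+1})x^n$, and the whole theorem reduces to the single rational identity $(1+tx)\delta_k+tx\gamma_k=t^{1-s}(1+tx)$. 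I checked that identity: after clearing denominators the terms over $1-(k-1)x$ collapse to $\frac{t(k-1)^{s-1}x^s}{1+tx}$ and the terms over $t^{s-1}(1-(k-1)tx)$ have numerator $(1-(k-1)tx)\bigl((1+tx)-(k-1)^{s-1}(tx)^s\bigr)$, so the two $x^s$ contributions cancel and the sum is $t^{1-s}$, as required. Your approach buys a combinatorial explanation of $\gamma_k$ and $\delta_k$ and avoids the auxiliary unknown $C_k^{(1)}$ entirely, at the cost of one explicit algebraic verification; the paper's approach is more mechanical but generalizes more readily to the $r\geq 2$ case, where the analogous block decomposition is less clean. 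One small caveat, shared with the paper: the base-case claim that each alternating word over $\{1,2\}$ contains exactly $n-s+1$ rectangles requires $s\geq 2$ (you flag this, the paper does not), whereas your central identity for $k\geq 3$ holds for $s=1$ as well.
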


\begin{proof}
For $n\geq 1$, the set $\Ss_{n,2}$ consists of two words, namely $1212\cdots$ and $2121\cdots$, containing exactly $n-s+1$ $1\times s$ rectangles, each. It is easily verified that the corresponding generating function is given by \eqref{eq;pp01}. Assume now that $k\geq 3$. For $n\geq s$ we have
\begin{align}
c_{n,k}&=c_{n,k}^{(0)}+\sum_{i=1}^{n}c_{n,k}^{(i)}\nonumber\\
&=c_{n,k-1}+\sum_{i=1}^{n-s+1}t^{i-1}c_{i-1,k-1}c_{n-i+1,k}^{(1)}+\sum_{i=n-s+2}^{n}t^{n-s+1}c_{i-1,k-1}c_{n-i+1,k}^{(1)},\label{eq;hg0}\\
c_{n,k}^{(1)}&=t\left(c_{n-1,k}-c_{n-1,k}^{(1)}\right).\label{eq;hg1}
\end{align}
Multiplying both sides of \eqref{eq;hg1} by $x^n$, summing over $n\geq s$ and adding $\sum_{n=0}^{s-1} c_{n,k}^{(1)}x^n$ to both sides, with some algebra we obtain 
\begin{equation}\label{eq;l01}
C_{k}^{(1)}(x,t)=\frac{1}{1+tx}\left(\sum_{n=0}^{s-1}c_{n,k}^{(1)}x^{n}+tx\left(C_{k}(x,t)-\sum_{n=0}^{s-2}c_{n,k}x^{n}\right)+tx\sum_{n=0}^{s-2}c_{n,k}^{(1)}x^{n}\right).    
\end{equation}
Multiplying both sides of \eqref{eq;hg0} by $x^n$, summing over $n\geq s$ and adding $\sum_{n=0}^{s-1} c_{n,k}x^n$ to both sides, with some algebra we obtain 
\begin{align}
C_{k}(x,t)&=\sum_{n=0}^{s-1}c_{n,k}x^{n}+\frac{1}{t^{s-1}}\left(C_{k-1}(tx,t)-\sum_{n=0}^{s-1}c_{n,k-1}(tx)^{n}\right)\nonumber\\&+C_{k-1}(tx,t)\left(C_{k}^{(1)}(x,t)-\sum_{n=0}^{s-1}c_{n,k}^{(1)}x^{n}\right)\nonumber\\ &+\frac{1}{t^{s-1}}\sum_{i=2}^{s}c_{s-i+1,k}^{(1)}(tx)^{s-i+1}\left(C_{k-1}(tx,t)-\sum_{n=0}^{i-2}c_{n,k-1}(tx)^{n}\right).  \label{eq;pp1}  
\end{align} Substituting \eqref{eq;l01} into \eqref{eq;pp1},  with some algebra we obtain \eqref{eq;11g0}.
\end{proof}

\begin{corollary}\label{cor;fb2}
The generating function for the total number $h_{k}(n)$ of $1\times s$ rectangles over all words belonging to $\Ss_{n,k}$ is given by 
\begin{equation}\label{yp1}
\frac{x^{s}\sum_{i=1}^{k-1}i^{s-1}(i+1)}{(1-(k-1)x)^{2}}.\end{equation} Thus, for $n\geq s$, \[h_{k}(n)=(k-1)^{n-s}(n-s+1)\sum_{i=1}^{k-1}i^{s-1}(i+1).\]
\end{corollary}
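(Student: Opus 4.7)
The cleanest approach is direct enumeration, bypassing differentiation of the rather intricate recursion for $C_k(x,t)$. For any word $w = w_1 \cdots w_n$, the number of $1 \times s$ rectangles in its bargraph equals $\sum_{j=1}^{n-s+1} \min(w_j, \ldots, w_{j+s-1})$, since each such rectangle is specified by a starting column $j$ and a height $h \le \min(w_j,\ldots,w_{j+s-1})$. Thus
\[
h_k(n) \;=\; \sum_{w \in \Ss_{n,k}} \sum_{j=1}^{n-s+1} \min(w_j, \ldots, w_{j+s-1}).
\]

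Now swap the order of summation and group by the Smirnov factor $u = w_j \cdots w_{j+s-1} \in \Ss_{s,k}$. For a fixed $u$ and a fixed $j \in [n-s+1]$, the number of Smirnov words of length $n$ containing $u$ at position $j$ equals the number of Smirnov prefixes of length $j-1$ ending in a letter $\ne u_1$ times the number of Smirnov suffixes of length $n-j-s+1$ beginning with a letter $\ne u_s$. By the standard symmetry that among the $k(k-1)^{i-1}$ Smirnov words of length $i$ each letter appears as the last letter in exactly $(k-1)^{i-1}$ of them, these two counts are $(k-1)^{j-1}$ and $(k-1)^{n-s-j+1}$ respectively, whose product $(k-1)^{n-s}$ is independent of both $j$ and $u$. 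Hence
\[
h_k(n) \;=\; (n-s+1)(k-1)^{n-s} \sum_{u \in \Ss_{s,k}} \min(u).
\]

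To evaluate $T := \sum_{u \in \Ss_{s,k}} \min(u)$, use the identity $\min(u) = \sum_{m=1}^k \mathbf{1}[\min u \ge m]$ together with the observation that subtracting $m-1$ from every letter gives a bijection $\{u \in \Ss_{s,k} : \min u \ge m\} \to \Ss_{s,k-m+1}$, whose cardinality is $(k-m+1)(k-m)^{s-1}$. Therefore
\[
T \;=\; \sum_{m=1}^{k} (k-m+1)(k-m)^{s-1} \;=\; \sum_{i=0}^{k-1}(i+1)i^{s-1} \;=\; \sum_{i=1}^{k-1}i^{s-1}(i+1),
\]
the last equality valid for $s \ge 2$, since the $i=0$ term then vanishes. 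The generating function follows at once from $\sum_{n \ge s}(n-s+1)(k-1)^{n-s}x^n = x^s/(1-(k-1)x)^2$.

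The only real obstacle is justifying the translation-invariance claim in the second paragraph; once that is done, the remaining steps are routine. An alternative---more in line with the proof of Corollary~\ref{cor;fb1}---is to implicitly differentiate the recursion~\eqref{eq;11g0} at $t=1$, using the simplifications $C_k(x,1) = (1+x)/(1-(k-1)x)$, $\gamma_k(x,1) = 0$, and $\delta_k(x,1) = 1$, and then induct on $k$ by verifying that the incremental contribution matches $k(k-1)^{s-1}x^s/(1-(k-1)x)^2$; but that route is considerably more tedious than the direct count above.
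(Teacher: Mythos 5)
Your proof is correct, but it takes a genuinely different route from the paper. The paper's proof differentiates the recursion \eqref{eq;11g0} with respect to $t$, evaluates at $t=1$ using $C_k(x,1)=(1+x)/(1-(k-1)x)$, $\gamma_k(x,1)=0$, $\delta_k(x,1)=1$, and inducts on $k$ --- exactly the ``tedious'' alternative you sketch in your last paragraph. Your direct count avoids the recursion entirely: the identification of the rectangle count with $\sum_j \min(w_j,\dots,w_{j+s-1})$, the observation that the number of Smirnov words containing a prescribed factor at a prescribed position is $(k-1)^{n-s}$ uniformly (which you do justify adequately via the alphabet-permutation symmetry, so the ``obstacle'' you worry about is already dispatched), and the layer-decomposition $\min(u)=\sum_m \mathbf{1}[\min u\ge m]$ are all sound, and the shift bijection onto $\Ss_{s,k-m+1}$ gives $T=\sum_{i=0}^{k-1}(i+1)i^{s-1}$. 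What your approach buys is a closed-form combinatorial explanation of the coefficient $\sum_{i=1}^{k-1}i^{s-1}(i+1)$, with no generating-function manipulation; it also makes visible that the stated formula requires $s\ge 2$ (for $s=1$ the $i=0$ term $0^0=1$ survives, and indeed the paper's base case for $C_2$, which credits each of $1212\cdots$ and $2121\cdots$ with only $n-s+1$ rectangles, is likewise only valid for $s\ge 2$). What the paper's approach buys is that it reuses the full bivariate generating function already established, which is the natural route if one wants more than the first moment.
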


\begin{proof}
We have 
\[\left[\frac{\partial}{\partial t}C_2(x,t)\right]_{|t=1}=\frac{2x^{s}}{(1-tx)^{2}}.\] Thus, \eqref{yp1} holds for $k=2$. Assume now that $k\geq 3$ and that \eqref{yp1} holds for $k-1$. It is not hard to see that 
\begin{align}
\left[\gamma_k(x,t)\right]_{|t=1}&=0\nonumber,\\
\left[\frac{\partial}{\partial t}\gamma_{k}(x,t)\right]_{|t=1}&=\frac{x(s-1)+s}{1-(k-1)x}-\frac{1+x-k(k-1)^{s-1}x^{s}}{(1-(k-1)x)^{2}},\nonumber\\
\left[\delta_k(x,t)\right]_{|t=1}&=1\nonumber,\nonumber\\
\left[\frac{\partial}{\partial t}\delta_{k}(x,t)\right]_{|t=1}&=1-s-\frac{x}{1+x}\left[\frac{\partial}{\partial t}\gamma_{k}(x,t)\right]_{|t=1},\nonumber\\
\left[C_{k}(x,t)\right]_{|t=1}&=\frac{1+x}{1-(k-1)x},\nonumber\\
\left[\frac{\partial}{\partial x}C_{k}(x,t)\right]_{|t=1}&=\frac{k}{(1-(k-1)x)^{2}},\nonumber\\ 
\left[\frac{\partial}{\partial t}C_{k}(tx,t)\right]_{|t=1}&=x\left[\frac{\partial}{\partial x}C_{k}(x,t)\right]_{|t=1}+\left[\frac{\partial}{\partial t}C_{k}(x,t)\right]_{|t=1}.\nonumber
\end{align} Differentiating \eqref{eq;11g0} with respect to $t$ and substituting $t=1$, we obtain
\begin{align}
\left[\frac{\partial}{\partial t}C_{k}(x,t)\right]_{|t=1}&=\left[\frac{(1+tx)\left(\gamma_{k}(x,t)+\delta_{k}(x,t)C_{k-1}(tx,t)\right)}{1+tx-txC_{k-1}(tx,t)}\right]_{|t=1}\nonumber\\&=\frac{(s-1)(k-1)x^{2}+(2+(k-2)s)x-s+1}{(1-(k-1)x)^{2}}\nonumber\\&+\frac{(1-(k-2)x)^{2}}{(1-(k-1)x)^{2}}\left[\frac{\partial}{\partial t}C_{k-1}(x,t)\right]_{|t=1}+\left[\frac{\partial}{\partial t}\gamma_{k}(x,t)\right]_{|t=1}\nonumber\\&=\frac{k(k-1)^{s-1}x^{s}}{\left(1-(k-1)x\right)^{2}}+\frac{x^{s}\sum_{i=1}^{k-2}i^{s-1}(i+1)}{(1-(k-1)x)^{2}}\nonumber\\&=\frac{x^{s}\sum_{i=1}^{k-1}i^{s-1}(i+1)}{(1-(k-1)x)^{2}}.\nonumber\qedhere
\end{align}
\end{proof}

\begin{example}
For $k=2$ and arbitrary $s$ we have 
\[
h_2(n)=2(n-s+1).\] For $k=3$ and arbitrary $s$ we have
\[h_3(n)=2^{n-s}(n-s+1)(2+3\cdot2^{s-1}),\] which, for $s=2, 3$ corresponds to \seqnum{A241204} and \seqnum{A281200} in the OEIS, respectively.
\end{example}

\subsection{\texorpdfstring{$r\times s$}{} rectangles, where \texorpdfstring{$r\geq 2$}{}}

Denote by $d_{n,k}=d_{n,k}(t)$ the distribution on $\Ss_{n,k}$ of the number of $r\times s$ rectangles. Let $D_k(x,t)$ denote the generating function of the numbers $d_{n,k}$. We shall need the following restrictions of $\Ss_{n,k}$: For $m\in [k]$ let \begin{align}
\Ss^{(\geq m)}_{n,k}&=\left\{ w_{1}\cdots w_{n}\in\Ss_{n,k}\;:\;w_{i}\geq m\text{ for every \ensuremath{i\in[n]}}\right\},\nonumber\\
\bar{\Ss}^{(\geq m)}_{n,k}&=\left\{ w_{1}\cdots w_{n}\in\Ss^{(\geq m)}_{n,k}\;:\;w_1 \neq m\right\}.\nonumber\\
\bar{\Ss}_{n,k}&=\left\{ w_{1}\cdots w_{n}\in\Ss_{n,k}\;:\;w_1 \neq r-1\right\}.\nonumber
\end{align} Denote by $d^{(\geq m)}_{n,k}$ (resp.\ $\bar{d}^{(\geq m)}_{n,k}$, $\bar{d}_{n,k}$) the restriction of $d_{n,k}$ to $\Ss^{(\geq m)}_{n,k}$ (resp.\ to $\bar{\Ss}^{(\geq m)}_{n,k}$, $\bar{\Ss}_{n,k}$). Let $D^{(\geq m)}_k(x,t)$ (resp.\ $\bar{D}^{(\geq m)}_k(x,t)$, $\bar{D}_k(x,t)$) denote the generating function of the numbers $d^{(\geq m)}_{n,k}$ (resp.\ $\bar{d}^{(\geq m)}_{n,k}$, $\bar{d}_{n,k}$).
Clearly, 
\begin{align}
\left|\Ss^{(\geq m)}_{n,k}\right|&=
\begin{cases}
1&\textnormal{if } n = 0\\
(k-m+1)(k-m)^{n-1} & \textnormal{otherwise},
\end{cases}\nonumber\\
\left|\bar{\Ss}^{(\geq m)}_{n,k}\right|&=
(k-m)^n.\nonumber
\end{align}

\begin{lemma}
We have 
\begin{equation}\label{we51}
D_r^{(\geq r-1)}(x,t)=\frac{1+x}{1-x}\end{equation} and, for
$k\geq r+1$, \[
D_k^{(\geq r-1)}(x,t) =
(1+x)\frac{\frac{1}{t^{s-1}}D_{k-1}^{(\geq r-1)}(tx,t)+\sigma_{k}(x,t)}{-\frac{x}{t^{s-1}}D_{k-1}^{(\geq r-1)}(tx,t)+\rho_{k}(x,t)},
\] where 
\begin{align}
\sigma_{k}(x,t)&=\frac{1+x-(k-r+1)(k-r)^{s-1}x^{s}}{1-(k-r)x}-\frac{1+tx-(k-r+1)(k-r)^{s-1}(tx)^{s}}{t^{s-1}(1-(k-r)tx)},\nonumber\\
\rho_{k}(x,t)&=1-(k-r+1)x^{2}\frac{1-((k-r)x)^{s-1}}{1-(k-r)x}+x\frac{1+tx-(k-r+1)(k-r)^{s-1}(tx)^{s}}{t^{s-1}(1-(k-r)tx)}.\nonumber
\end{align}
\end{lemma}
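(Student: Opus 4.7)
The plan is to handle the base case $k=r$ directly and then derive the recursive formula via a structural decomposition of Smirnov words combined with a shift bijection that relates letters $\geq r$ to letters $\geq r-1$.

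For the base case, the set $\Ss^{(\geq r-1)}_{n,r}$ consists of Smirnov words on the two-letter alphabet $\{r-1,r\}$: for $n \geq 1$ there are exactly two such words (the alternating words starting with $r-1$ or with $r$), and since no two consecutive columns both have height $r$, no window of $s$ consecutive columns (with $s \geq 2$) has minimum height $\geq r$, so no $r \times s$ rectangle fits. Hence $d^{(\geq r-1)}_{n,r}(t) = 2$ for $n \geq 1$, yielding $D^{(\geq r-1)}_r(x,t) = (1+x)/(1-x)$.

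For $k \geq r+1$, the key observation is that any $r \times s$ rectangle must lie inside a maximal block of consecutive letters $\geq r$, since a column at a letter equal to $r-1$ is too short to support a rectangle of height $r$. Each word $w \in \Ss^{(\geq r-1)}_{n,k}$ therefore admits a unique decomposition
\[w = u_0 \,(r-1)\, u_1 \,(r-1)\, \cdots \,(r-1)\, u_\ell,\]
where $\ell \geq 0$ counts the occurrences of $r-1$ and each $u_j$ is a (possibly empty) Smirnov word on $\{r,\ldots,k\}$, with the Smirnov condition forcing $u_j$ to be non-empty for internal indices $1 \leq j \leq \ell - 1$. Since the rectangle count is additive across the blocks, writing $P = D^{(\geq r)}_k(x,t)$ and $P^{*} = P - 1$, enumeration of the decomposition collapses the geometric sum to
\[D^{(\geq r-1)}_k(x,t) = P + \sum_{\ell \geq 1} x P^2 (x P^{*})^{\ell - 1} = \frac{(1+x)P}{1 - x(P-1)}.\]

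The final step is to rewrite $P = D^{(\geq r)}_k(x,t)$ in terms of $D^{(\geq r-1)}_{k-1}(tx,t)$ via the shift bijection $w \mapsto w-1$ from $\Ss^{(\geq r)}_{n,k}$ onto $\Ss^{(\geq r-1)}_{n,k-1}$. A direct height count shows that for $n \geq s$ one has $\rc(w) = \rc(w-1) + (n-s+1)$, because every window of $s$ consecutive columns in $w$ has minimum height at least $r$, so the shift destroys exactly one rectangle per window; for $n < s$ both counts vanish. Splitting the generating function at $n = s$ and using the coincidence $|\Ss^{(\geq r)}_{n,k}| = |\Ss^{(\geq r-1)}_{n,k-1}|$, the low-order correction terms telescope into the closed form $P = t^{-(s-1)} D^{(\geq r-1)}_{k-1}(tx,t) + \sigma_k(x,t)$. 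Substituting this into the displayed expression for $D^{(\geq r-1)}_k$ and checking the identity $1 + x - x\sigma_k(x,t) = \rho_k(x,t)$ then produces the stated formula. The main obstacle will be the algebraic identification of the telescoped low-order terms with exactly $\sigma_k$ and of $1 + x - x\sigma_k$ with exactly $\rho_k$; these are routine rational-function manipulations but must be carried out with care to land on the precise expressions in the lemma.
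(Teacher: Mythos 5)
Your proof is correct, and the recursive step takes a genuinely different route from the paper. The paper decomposes a word of $\Ss^{(\geq r-1)}_{n,k}$ at the \emph{first} occurrence of the letter $r-1$, which produces a pair of coupled recurrences --- one for $d^{(\geq r-1)}_{n,k}$ and one for the auxiliary quantity $\bar{d}^{(\geq r-1)}_{n,k}$ (words whose first letter is not $r-1$) --- and then solves the second generating-function equation for $\bar{D}^{(\geq r-1)}_k(x,t)$ and substitutes it into the first. You instead decompose into \emph{all} maximal blocks of letters $\geq r$ separated by single occurrences of $r-1$, which collapses to the closed form $D^{(\geq r-1)}_k=(1+x)P/(1+x-xP)$ with $P=D^{(\geq r)}_k$ by a geometric series; this eliminates the auxiliary series $\bar{D}^{(\geq r-1)}_k$ and most of the low-order bookkeeping. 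Both arguments rest on the same two facts: an $r\times s$ rectangle cannot cross a column of height $r-1$, and the shift bijection $w\mapsto w-1$ from $\Ss^{(\geq r)}_{n,k}$ onto $\Ss^{(\geq r-1)}_{n,k-1}$ satisfies $\rc(w)=\rc(w-1)+(n-s+1)$ for $n\geq s$. The two identities you deferred do hold: since $d^{(\geq r-1)}_{n,k-1}=(k-r+1)(k-r)^{n-1}$ for $1\leq n\leq s-1$, the initial segment of $P$ sums exactly to the first fraction in $\sigma_k$, giving $P=t^{-(s-1)}D^{(\geq r-1)}_{k-1}(tx,t)+\sigma_k(x,t)$, and a short computation confirms $1+x-x\sigma_k(x,t)=\rho_k(x,t)$, so your denominator $1+x-xP$ matches the paper's $-\tfrac{x}{t^{s-1}}D^{(\geq r-1)}_{k-1}(tx,t)+\rho_k(x,t)$. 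One minor point: your base-case argument (like the paper's) needs $s\geq 2$; for $s=1$ each column of height $r$ in the alternating words does contain an $r\times 1$ rectangle, and you were right to flag that hypothesis explicitly.
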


\begin{proof}
For $n\geq 1$, the set $\Ss^{(\geq r-1)}_{n,r}$ consists of two words, namely $r(r-1)r(r-1)\cdots$ and $(r-1)r(r-1)r(r-1)\cdots$, containing no $r\times s$ rectangles. Thus, \eqref{we51} holds true in this case. Assume now that $k\geq r+1$. For $n\geq s$ we have
\begin{align}
d^{(\geq r-1)}_{n,k}&=d^{(\geq r)}_{n,k}+\sum_{i=1}^{n}d_{i-1,k}^{(\geq r)}\bar{d}_{n-i,k}^{(\geq r-1)}\nonumber\\
&=t^{n-s+1}d_{n,k-1}^{(\geq r-1)}+\sum_{i=1}^{s}d_{i-1,k-1}^{(\geq r-1)}\bar{d}_{n-i,k}^{(\geq r-1)}+\sum_{i=s+1}^{n}t^{i-s}d_{i-1,k-1}^{(\geq r-1)}\bar{d}_{n-i,k}^{(\geq r-1)},\label{yy1}\\
\bar{d}_{n,k}^{(\geq r-1)}&=d_{n,k}^{(\geq r)}+\sum_{i=2}^{n}d_{i-1,k}^{(\geq r)}\bar{d}_{n-i,k}^{(\geq r-1)}\nonumber\\&=t^{n-s+1}d_{n,k-1}^{(\geq r-1)}+\sum_{i=2}^{s}d_{i-1,k-1}^{(\geq r-1)}\bar{d}_{n-i,k}^{(\geq r-1)}+\sum_{i=s+1}^{n}t^{i-s}d_{i-1,k-1}^{(\geq r-1)}\bar{d}_{n-i,k}^{(\geq r-1)}.
\label{yy2}
\end{align}
Multiplying both sides of \eqref{yy1} by $x^n$, summing over $n\geq s$ and adding $\sum_{n=0}^{s-1}d_{n,k}^{(\geq r-1)}x^{n}$ to both sides, with some algebra we obtain the equation
\begin{align}
&D_{k}^{(\geq r-1)}(x,t)\nonumber\\&=\sum_{n=0}^{s-1}d_{n,k}^{(\geq r-1)}x^{n}+\frac{1}{t^{s-1}}\left(D_{k-1}^{(\geq r-1)}(tx,t)-\sum_{n=0}^{s-1}d_{n,k-1}^{(\geq r-1)}(tx)^{n}\right)-\sum_{i=1}^{s}d_{i-1,k-1}^{(\geq r-1)}x^{i}\sum_{n=0}^{s-i-1}\bar{d}_{n,k}^{(\geq r-1)}x^{n}\nonumber\\
&+\bar{D}_{k}^{(\geq r-1)}(x,t)\left(\sum_{i=1}^{s}d_{i-1,k-1}^{(\geq r-1)}x^{i}+\frac{x}{t^{s-1}}\left(D_{k-1}^{(\geq r-1)}(tx,t)-\sum_{n=0}^{s-1}d_{n,k-1}^{(\geq r-1)}(tx)^{n}\right)\right).\label{pr8}
\end{align}
Multiplying both sides of \eqref{yy2} by $x^n$, summing over $n\geq s$ and adding $\sum_{n=0}^{s-1}\bar{d}_{n,k}^{(\geq r-1)}x^{n}$ to both sides, with some algebra we obtain the equation
\begin{align}
&\bar{D}_{k}^{(\geq r-1)}(x,t)\left(1-\frac{x}{t^{s-1}}D_{k-1}^{(\geq r-1)}(tx,t)-\sum_{i=2}^{s}d_{i-1,k-1}^{(\geq r-1)}x^{i}+\frac{x}{t^{s-1}}\sum_{n=0}^{s-1}d_{n,k-1}^{(\geq r-1)}(tx)^{n}\right)\nonumber\\&=
\frac{1}{t^{s-1}}D_{k-1}^{(\geq r-1)}(tx,t)+\sum_{n=0}^{s-1}\bar{d}_{n,k}^{(\geq r-1)}x^{n}-\frac{1}{t^{s-1}}\sum_{n=0}^{s-1}d_{n,k-1}^{(\geq r-1)}(tx)^{n}\nonumber\\&-\sum_{i=2}^{s}d_{i-1,k-1}^{(\geq r-1)}x^{i}\sum_{n=0}^{s-i-1}\bar{d}_{n,k}^{(\geq r-1)}x^{n}.  \nonumber  
\end{align} Substituting this into \eqref{pr8}, with some algebra, the assertion follows.
\end{proof}

\begin{theorem}
We have \[D_r(x,t) =
\frac{1+x}{1-x}\] and, for $k\geq r+1$,
\begin{equation}\label{iw9}
D_k(x,t)=
(1+x)\frac{\frac{1}{t^{s-1}}D_{k-1}^{(\geq r-1)}(tx,t)+\sigma_{k}(x,t)}{-\frac{(r-1)x}{t^{s-1}}D_{k-1}^{(\geq r-1)}(tx,t)-(r-2)(1+x)+(r-1)\rho_{k}(x,t)}.
\end{equation}
\end{theorem}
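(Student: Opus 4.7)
My plan is to mimic the derivation of the preceding lemma, replacing the single ``short'' letter $r-1$ by the full short sub-alphabet $\{1,\ldots,r-1\}$, all of whose letters contribute nothing to the $r\times s$ rectangle count. The base case $k=r$ reduces to the observation that no $r\times s$ rectangle can appear in a Smirnov word over $[r]$: such a rectangle would require $s$ consecutive positions holding the letter $r$, contradicting the Smirnov constraint. Hence $d_{n,r}(t)=|\Ss_{n,r}|$, and summing over $n$ yields the stated closed form.

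For $k\geq r+1$ I would decompose each $w\in\Ss_{n,k}$ according to whether it contains a short letter, and if so at which leftmost position $i$. Words with no short letter contribute $d^{(\geq r)}_{n,k}$. Otherwise the prefix $w_1\cdots w_{i-1}$ is a Smirnov word over $\{r,\ldots,k\}$, contributing $d^{(\geq r)}_{i-1,k}$; the letter at position $i$ admits $r-1$ values, and by the symmetry of the short sub-alphabet the tail $w_{i+1}\cdots w_n$ has distribution $\bar{d}_{n-i,k}$ regardless of that choice. This produces, for $n\geq s$,
\[
d_{n,k} = d^{(\geq r)}_{n,k} + (r-1)\sum_{i=1}^{n} d^{(\geq r)}_{i-1,k}\,\bar{d}_{n-i,k}.
\]
Decomposing $\bar{\Ss}_{n,k}$ the same way, but now splitting according to whether $w_1$ is itself short (with $r-2$ allowed values, since $w_1\neq r-1$) or tall, gives
\[
\bar{d}_{n,k} = d^{(\geq r)}_{n,k} + (r-2)\bar{d}_{n-1,k} + (r-1)\sum_{i=2}^{n} d^{(\geq r)}_{i-1,k}\,\bar{d}_{n-i,k}.
\]
Subtracting collapses most terms to leave $d_{n,k}-\bar{d}_{n,k}=\bar{d}_{n-1,k}$, that is, $D_k(x,t)=(1+x)\bar{D}_k(x,t)$.

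Converting the first recurrence into a generating function identity proceeds exactly as in the preceding lemma, via the shift identity $d^{(\geq r)}_{n,k}=t^{n-s+1}d^{(\geq r-1)}_{n,k-1}$ for $n\geq s$ (obtained by subtracting one from every letter of a word in $\Ss^{(\geq r)}_{n,k}$ and noting that exactly $n-s+1$ base-row rectangles are lost). Eliminating $\bar{D}_k$ via the relation $D_k=(1+x)\bar{D}_k$ yields
\[
D_k(x,t) = \frac{(1+x)\,D^{(\geq r)}_k(x,t)}{(1+x)-(r-1)x\,D^{(\geq r)}_k(x,t)},
\]
and the same shift identity at the generating function level gives $D^{(\geq r)}_k(x,t)=\tfrac{1}{t^{s-1}}D^{(\geq r-1)}_{k-1}(tx,t)+\sigma_k(x,t)$, precisely the numerator appearing in the preceding lemma.

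The main obstacle is the rational identity $\rho_k(x,t)+x\sigma_k(x,t)=1+x$. I would verify it by direct computation: the two $(tx)$-dependent fractions in $\rho_k$ and $x\sigma_k$ cancel, and after clearing the common denominator $1-(k-r)x$ the numerator simplifies to $x(1-(k-r)x)$. Using this identity to rewrite $(1+x)-(r-1)x\sigma_k=(r-1)\rho_k-(r-2)(1+x)$ converts the denominator above into
\[
-\tfrac{(r-1)x}{t^{s-1}}D^{(\geq r-1)}_{k-1}(tx,t)-(r-2)(1+x)+(r-1)\rho_k(x,t),
\]
which matches the claimed formula.
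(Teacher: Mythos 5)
Your derivation for $k\geq r+1$ is correct and rests on the same combinatorial decomposition as the paper's proof: both classify a word by the leftmost occurrence of a letter from $\{1,\ldots,r-1\}$ and arrive at exactly the recurrences
\[
d_{n,k}=d^{(\geq r)}_{n,k}+(r-1)\sum_{i=1}^{n}d^{(\geq r)}_{i-1,k}\bar d_{n-i,k},\qquad
\bar d_{n,k}=d^{(\geq r)}_{n,k}+(r-2)\bar d_{n-1,k}+(r-1)\sum_{i=2}^{n}d^{(\geq r)}_{i-1,k}\bar d_{n-i,k}.
\]
Where you genuinely diverge is in the elimination of $\bar D_k$. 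The paper turns the second recurrence into a functional equation, solves it for $\bar D_k(x,t)$, and substitutes into the first, burying the simplification in ``with some algebra.'' You instead subtract the two recurrences to get $d_{n,k}-\bar d_{n,k}=\bar d_{n-1,k}$, hence $D_k=(1+x)\bar D_k$, which collapses the first equation to $D_k=(1+x)D_k^{(\geq r)}/\bigl(1+x-(r-1)xD_k^{(\geq r)}\bigr)$ with $D_k^{(\geq r)}(x,t)=t^{1-s}D_{k-1}^{(\geq r-1)}(tx,t)+\sigma_k(x,t)$. The price is the identity $\rho_k+x\sigma_k=1+x$, which you must prove to recover the stated denominator; your verification sketch is correct (the two $tx$-fractions cancel and the remaining numerator over $1-(k-r)x$ is $x(1-(k-r)x)$), and with it $(1+x)-(r-1)x\sigma_k=(r-1)\rho_k-(r-2)(1+x)$, so your formula coincides with \eqref{iw9}. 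Your route is shorter, makes the shape of the answer transparent, and would equally streamline the preceding lemma, which is the special case of a single short letter.

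One genuine flaw in the write-up concerns the base case. Your reasoning correctly gives $d_{n,r}(t)=|\Ss_{n,r}|$ (at least for $s\geq 2$), but summing yields $1+\sum_{n\geq 1}r(r-1)^{n-1}x^n=\frac{1+x}{1-(r-1)x}$, which equals the displayed $\frac{1+x}{1-x}$ only when $r=2$. The mismatch appears to lie with the theorem as printed rather than with your count (note that \eqref{iw9} at $t=1$ correctly returns $\frac{1+x}{1-(k-1)x}$), but you should not assert that your computation ``yields the stated closed form'' when for $r\geq 3$ it does not; say explicitly what it does yield. A lesser caveat, shared with the paper: the claim that Smirnov words over $[r]$ contain no $r\times s$ rectangles silently assumes $s\geq 2$, since for $s=1$ every occurrence of the letter $r$ contributes an $r\times 1$ rectangle.
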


\begin{proof}
We have
\begin{align}
d_{n,k}&=d_{n,k}^{(\geq r)}+(r-1)\sum_{i=1}^{n}d_{i-1,k}^{(\geq r)}\bar{d}_{n-i,k}\nonumber\\&=t^{n-s+1}d_{n,k-1}^{(\geq r-1)}+(r-1)\sum_{i=1}^{s}d_{i-1,k-1}^{(\geq r-1)}\bar{d}_{n-i,k}+(r-1)\sum_{i=s+1}^{n}t^{i-s}d_{i-1,k-1}^{(\geq r-1)}\bar{d}_{n-i,k}\label{gowp1},\\
\bar{d}_{n,k}&=d_{n,k}^{(\geq r)}+(r-1)\sum_{i=2}^{n}d_{i-1,k}^{(\geq r)}\bar{d}_{n-i,k}+(r-2)\bar{d}_{n-1,k}\nonumber\\&=t^{n-s+1}d_{n,k-1}^{(\geq r-1)}+(r-1)\sum_{i=2}^{s}d_{i-1,k-1}^{(\geq r-1)}\bar{d}_{n-i,k}\nonumber\\&+(r-1)\sum_{i=s+1}^{n}t^{i-s}d_{i-1,k-1}^{(\geq r-1)}\bar{d}_{n-i,k}+(r-2)\bar{d}_{n-1,k}\label{gowp2}.
\end{align}
Multiplying both sides of \eqref{gowp1} by $x^n$, summing over $n\geq s$ and adding $\sum_{n=0}^{s-1} d_{n,k}x^n$ to both sides, with some algebra we obtain 
\begin{align}
&D_{k}(x,t)\nonumber\\&=\sum_{n=0}^{s-1}d_{n,k}x^{n}+\frac{1}{t^{s-1}}\left(D_{k-1}^{(\geq r-1)}(tx,t)-\sum_{n=0}^{s-1}d_{n,k-1}^{(\geq r-1)}(tx)^{n}\right)-(r-1)\sum_{i=1}^{s}d_{i-1,k-1}^{(\geq r-1)}x^{i}\sum_{n=0}^{s-i-1}\bar{d}_{n,k}x^{n}\nonumber\\&+\bar{D}_{k}(x,t)\left((r-1)\sum_{i=1}^{s}d_{i-1,k-1}^{(\geq r-1)}x^{i}+\frac{(r-1)x}{t^{s-1}}\left(D_{k-1}^{(\geq r-1)}(tx,t)-\sum_{n=0}^{s-1}d_{n,k-1}^{(\geq r-1)}(tx)^{n}\right)\right).  \label{hio3} 
\end{align}
Multiplying both sides of \eqref{gowp2} by $x^n$, summing over $n\geq s$ and adding $\sum_{n=0}^{s-1} \bar{d}_{n,k}x^n$ to both sides, with some algebra we obtain 
\begin{align}
&\bar{D}_{k}(x,t)\nonumber\\&=\sum_{n=0}^{s-1}\bar{d}_{n,k}x^{n}+\frac{1}{t^{s-1}}\left(D_{k-1}^{(\geq r-1)}(tx,t)-\sum_{n=0}^{s-1}d_{n,k-1}^{(\geq r-1)}(tx)^{n}\right)\nonumber\\&+(r-1)\sum_{i=2}^{s}d_{i-1,k-1}^{(\geq r-1)}x^{i}\left(\bar{D}_{k}(x,t)-\sum_{n=0}^{s-i-1}\bar{d}_{n,k}x^{n}\right)\nonumber\\&+\frac{(r-1)x}{t^{s-1}}\bar{D}_{k}(x,t)\left(D_{k-1}^{(\geq r-1)}(tx,t)-\sum_{n=0}^{s-1}d_{n,k-1}^{(\geq r-1)}(tx)^{n}\right)+(r-2)x\left(\bar{D}_{k}(x,t)-\sum_{n=0}^{s-2}\bar{d}_{n,k}x^{n}\right).  \nonumber
\end{align} Solving for $\bar{D}_{k}(x,t)$ and substituting it into \eqref{hio3}, with some algebra the assertion follows.
\end{proof}

\begin{corollary}\label{cor;fb0}
The generating function for the total number $i_{k}(n)$ of $r\times s$ rectangles over all words belonging to $\Ss_{n,k}$ is given by \[\frac{x^{s}\sum_{i=1}^{k-r}i^{s-1}(i+1)}{(1-(k-1)x)^{2}}.\] Thus, for $n\geq s$, \[i_{k}(n)=(k-1)^{n-s}(n-s+1)\sum_{i=1}^{k-r}i^{s-1}(i+1).\]
\end{corollary}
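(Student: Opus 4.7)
The plan is to prove the corollary by induction on $k\geq r$, in parallel with the argument used for Corollary \ref{cor;fb2}. The base case $k=r$ is immediate, since $D_r(x,t)=(1+x)/(1-x)$ is independent of $t$, so $[\partial_t D_r(x,t)]_{t=1}=0$, which matches the empty sum $\sum_{i=1}^{0}$ on the right-hand side.

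For the inductive step, the recurrence \eqref{iw9} expresses $D_k(x,t)$ in terms of $D_{k-1}^{(\geq r-1)}(tx,t)$ rather than $D_{k-1}(tx,t)$, so one first needs an analogous formula for the restricted class. I would therefore establish, as an auxiliary claim proved by a parallel induction on $k$ using the recurrence from the Lemma preceding Theorem 7, that
\[
\left[\frac{\partial}{\partial t}D_k^{(\geq r-1)}(x,t)\right]_{t=1} = \frac{x^{s}\sum_{i=1}^{k-r} i^{s-1}(i+1)}{(1-(k-r+1)x)^{2}},
\]
with base case $D_r^{(\geq r-1)}(x,t)=(1+x)/(1-x)$ again independent of $t$. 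Morally, this is the same statement as the corollary itself, now for Smirnov words over the alphabet $\{r-1,\ldots,k\}$ of size $k-r+2$, and the induction is carried out by the same mechanical differentiation of its own recurrence.

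With the auxiliary identity in hand, the main computation is to differentiate \eqref{iw9} with respect to $t$ via the quotient rule and evaluate at $t=1$. One first records the elementary evaluations $\sigma_k(x,1)=0$, $\rho_k(x,1)=1+x$, and $D_{k-1}^{(\geq r-1)}(x,1)=(1+x)/(1-(k-r)x)$, and computes $[\partial_t \sigma_k(x,t)]_{t=1}$ and $[\partial_t \rho_k(x,t)]_{t=1}$ explicitly as rational functions of $x$. One then expands $[\partial_t D_{k-1}^{(\geq r-1)}(tx,t)]_{t=1}$ via the chain rule as $x\,[\partial_x D_{k-1}^{(\geq r-1)}(x,t)]_{t=1} + [\partial_t D_{k-1}^{(\geq r-1)}(x,t)]_{t=1}$ and feeds in the inductive hypothesis.

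The main obstacle I anticipate is the algebraic bookkeeping: \eqref{iw9} is a quotient of two genuinely $t$-dependent expressions, so its $t$-derivative splits into several terms that must be combined and simplified. The vanishing $\sigma_k(x,1)=0$ should kill the most unwieldy cross terms, and the remaining algebra should telescope so that the contribution appended to the sum at stage $k$ is exactly $(k-r)^{s-1}(k-r+1)\,x^s$, promoting $\sum_{i=1}^{k-r-1} i^{s-1}(i+1)$ to $\sum_{i=1}^{k-r} i^{s-1}(i+1)$. This mirrors the appearance of the term $k(k-1)^{s-1}$ in the proof of Corollary \ref{cor;fb2}, and I expect precisely the same type of cancellation to occur once the evaluations at $t=1$ are substituted.
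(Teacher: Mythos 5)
Your proposal is correct and follows essentially the same route as the paper: the paper likewise records the evaluations of $D_k^{(\geq r-1)}$, its $x$-derivative, and its $t$-derivative at $t=1$ (the last being exactly your auxiliary identity, established inductively from the Lemma's recurrence), and then differentiates \eqref{iw9} as in the proof of Corollary~\ref{cor;fb2}. The key simplifications you identify, $\sigma_k(x,1)=0$ and $\rho_k(x,1)=1+x$, are indeed what makes the final computation go through.
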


\begin{proof}
It is not hard to see that
\begin{align}
\left[D_k^{(\geq r-1)}(x,t)\right]_{|t=1} &= \frac{1+x}{1-(k-r+1)x},\nonumber\\
\left[\frac{\partial}{\partial x}D_k^{(\geq r-1)}(x,t)\right]_{|t=1}&=\frac{k-r+2}{\left(1-\left(k-r+1\right)x\right)^{2}},\nonumber\\
\left[\frac{\partial}{\partial t}D_k^{(\geq r-1)}(x,t)\right]_{|t=1}&=\frac{x^{s}\sum_{n=0}^{k-r}n^{s-1}(n+1)}{(1-(k-r+1)x)^{2}}.\nonumber
\end{align} We omit the rest of the details, that are very similar to those in the proof of Corollary~\ref{cor;fb2}.
\end{proof}

\end{document}